\newtheorem{theorem}{Theorem}[section]
\newtheorem{corollary}{Corollary}[section]
\newtheorem{lemma}{Lemma}[section]
\theoremstyle{definition}
\newtheorem{definition}[theorem]{Definition}
\newtheorem{remark}{Remark}[section]
\newtheorem{exmp}{Example}[section]
\let \al=\alpha
\let \be=\beta
\let \var=\varphi
\let \vare=\varepsilon
\let \de=\delta
\let \th=\theta
\let \la=\lambda
\let \ga=\gamma
\let \q=\quad
\let \med=\medskip
\let \smal=\smallskip
\let \dps =\displaystyle
\newcommand{\R}{\mathbb{R}}
\newcommand{\N}{\mathbb{N}}
\title[On Stability for  Impulsive  Delay Differential Equations] 
      {On Stability for  Impulsive  Delay Differential Equations and Application to a Periodic  Lasota-Wazewska Model}
\author[Teresa Faria and Jos\'{e} J. Oliveira]{}
\subjclass{34K45, 34K25, 92D25.}
 \keywords{delay differential equation,  impulses, Yorke condition,   global attractivity, Lasota-Wazewska  model, periodic solution.}
 \email{teresa.faria@fc.ul.pt}
 \email{jjoliveira@math.uminho.pt}
\thanks{$^*$Corresponding author: Tel: +351253604084, e-mail:jjoliveira@math.uminho.pt}
\begin{document}
\maketitle

\centerline{\scshape Teresa Faria}
\medskip
{\footnotesize
 \centerline{Departamento de Matem\'atica and CMAF-CIO,}
   \centerline{ Faculdade de Ci\^encias, Universidade de Lisboa}
   \centerline{Campo Grande, 1749-016 Lisboa, Portugal}
} 

\medskip

\centerline{\scshape Jos\'{e} J. Oliveira$^*$}
\medskip
{\footnotesize
 \centerline{CMAT and Departamento de Matem\'atica e Aplica\c c\~oes,}
   \centerline{Escola de Ci\^encias, Universidade do Minho,}
   \centerline{Campus de Gualtar, 4710-057 Braga,
   	Portugal}
}

\bigskip

 \centerline{(Communicated by the associate editor name)}

\begin{abstract}
 We consider a class of scalar delay differential equations  with impulses and satisfying an Yorke-type condition, for which some 
 criteria for the global stability of the zero solution are  established. Here, the usual requirements about the impulses are relaxed.
 The  results can  be applied to  study  the stability of other solutions, such as periodic solutions. As an illustration, a very general periodic Lasota-Wazewska model with impulses and multiple time-dependent delays is addressed, and the global attractivity of its positive periodic solution analysed. 
 Our results are discussed within the context of recent literature.
\end{abstract}

\section{Introduction}\label{seccao-intruducao}\setcounter{equation}{0}

The   high impact of differential equations with
impulses  in terms of their application in population dynamics, disease modelling and other fields has lead to an increasing interest in the theory of impulsive systems. Recently, the theoretical analysis of existence and regularity of solutions to  impulsive systems with delays, as well as  the study of  concrete impulsive models used in mathematical biology, have become
an important area of research.

In this paper, we study  a class of scalar impulsive  differential equations  with an instantaneous negative feedback term and (possibly unbounded) delays. 

Let $\tau:[0,\infty)\to[0,\infty)$ be
a continuous function     such that    $\dps\lim_{t\to\infty}(t-\tau(t))=\infty$.  Without loss of generality, we 
suppose that $t\mapsto t-\tau(t)$ is non-decreasing; otherwise, whenever necessary, we can replace $t\mapsto t-\tau(t)$ by $d(t):=\inf_{s\ge t} (s-\tau(s))$, which is non-decreasing and satisfies the above conditions for $t-\tau(t)$.
For $t\geq0$, denote by  $PC(t)=PC([-\tau(t), 0];\R)$  the space of real functions that are piecewise continuous functions on $[-\tau(t), 0]$  and left continuous  on $(-\tau(t), 0]$,
with the norm
$\|\phi\|_t:=\dps\sup_{\theta\in[-\tau(t),0]}|\phi(\theta)|$ for $\phi\in PC(t)$. 

We consider  a scalar impulsive delay differential equation (DDE) of the form
\begin{equation}\label{1.1)}
\begin{split}
&x'(t)+a(t)x(t)=f(t, x_t),\q  0\le t\ne t_k, \\
&\Delta(x(t_k)):=x(t_k^+)-x(t_k) =I_k(x(t_k)),\q \, k=1,2,\dots,
\end{split}
\end{equation}
where: $x'(t)$ is the left-hand derivative of $x(t)$, 
$(t_k)_k$ is an increasing  sequence of positive numbers with $t_k\to \infty$, 
$a:[0,\infty)\to[0,\infty)$ and  $I_k:\R\to\R$ are  continuous functions; 
$x_t$ denotes the restriction of $x(t)$ to the interval $[t-\tau(t),t]$, with
$x_t\in PC(t)$  given by
$$x_t(\th)=x(t+\th)\q {\rm for}\q -\tau(t)\le \th\le 0;$$
$f(t,\var)$ is a functional defined for $t\ge 0$ and $\var\in PC(t)$ with some regularity discussed below.
We  also assume that $f(t,0)= 0$ for $t\ge 0$ and $I_k(0)=0$ for $k\in\N$, thus $x\equiv 0$ is a solution of \eqref{1.1)}.

The particular case of \eqref{1.1)} with $a(t)\equiv 0$ reads as
\begin{equation}\label{1.2)}
\begin{split}
&x'(t)=f(t, x_t), \q  0\le t\ne t_k, \\
&\Delta(x(t_k))=I_k(x(t_k)),\q \, k=1,2,\dots,
\end{split}
\end{equation}
and  has  been studied by many authors (see e.g. \cite{3.,14.,16.,17.,18.,19.,ZhaoYan}). In the present study, the aim however is to take full advantage of the negative instantaneous feedback given by the term $a(t)x(t)$ on the left-hand side of \eqref{1.1)}.

\med

A rigorous abstract formulation of the existence of solutions problem for \eqref{1.1)},  or  for more general impulsive DDEs, has been well established in the  literature, and will be omitted here (see e.g.,   \cite{1.,LiuB,LiuB2,8.,14., 15.} and references therein for more details).  We need however to set some  properties for $f$, as well as  clarify the space of initial conditions.

For a compact interval $[\al,\be]\subset \R$, denote by $B([\al,\be];\R)$ the space of bounded functions from $[\al,\be]$ to $\R$  equipped with  the supremum norm, and  $PC([\al,\be];\R)$ the subspace of $B([\al,\be];\R)$ of functions that are piecewise continuous on $[\al,\be]$  and left continuous  on $(\al,\be]$.   
Now, define the space $PC=PC((-\infty,0];\R)$ as the space of functions from $(-\infty,0]$ to $\R$ for which the restriction to each compact interval $[\alpha,\beta]\subset(-\infty,0]$  is in the closure of   $PC([\al,\be] ;\R)$ in $B([\al,\be];\R)$. 
A function $\varphi\in PC$ is continuous everywhere except at most for an enumerable number of points $s$ for which $\varphi(s^-)$, $\varphi(s^+)$ exist and $\varphi(s^-)=\varphi(s)$. Denote by $BPC$ the subspace of all bounded functions in $PC$, $BPC=\{\varphi\in PC:\var \ {\rm is\ bounded\ on\ }(-\infty,0]\}$, with the supremum norm $\|\varphi\|=\sup_{s\leq0}|\varphi(s)|$.  Clearly, each space $PC(t)$ can be taken as a subspace of $BPC$. For equations without impulses, the subspaces of continuous functions $C, BC$ and $C(t)$ of $PC, BPC$ and $PC(t)$, respectively, will be considered.

For $t\ge 0, \var\in BPC$, we 
write $f(t,\var_0)=f(t, L(t,\var))=:F(t,\var),$ where $L(t,\var)=\var_0$, i.e., $L(t,\var)=\var_{|_{[-\tau(t),0]}}$. For the impulsive DDE \eqref{1.1)}, we consider initial conditions of the form
\begin{equation}\label{1.3)}
x(t_0+s)=\varphi(s),\q s\le 0,
\end{equation}
with $t_0\geq0,\ \varphi\in BPC.$
In view of our purposes, we may suppose that the extension  $F:[0,\infty)\times BPC\to\R$ of $f$ is continuous or piecewise continuous (for simplicity,  we abuse the language and refer to $f$ as being  continuous or piecewise continuous as well), but more general frameworks are allowed. For $f$ piecewise continuous, under  very general conditions (which are satisfied with the hypotheses we shall  imposed in Section \ref{seccao-estabilidade}),
from \cite{1.,8.,13.,15.} it follows that  the initial value problem \eqref{1.1)}-\eqref{1.3)} has a unique  solution $x(t)$ defined on $[t_0,\infty)$. This solution will be denoted by $x(t,t_0,\varphi)$. 

\med

As an important example of a one-dimensional DDE appearing in mathematical biology, we refer to the well-known Lasota-Wazewska equation
$$
N'(t)=-a N(t)+be^{-\be N(t-\tau)},\q t\ge 0
$$
($a,b,\be,\tau>0$) introduced in \cite{11.} to model the survival of read blood cells.
Generalisations of this equation with periodic coefficients and multiple delays have received  much attention from mathematicians and other researchers (see  \cite{4.,7.} and references therein).
More recently  (\cite{5., 6., 12.}),    impulses have been added  to such models,  as in
\begin{equation}\label{1.4)}
\begin{split}
&N'(t)+a(t)N(t)=\sum_{i=1}^n b_i(t)e^{-\be_i (t)N(t-\tau_i(t))},\q  0\le t\ne t_k, \\
&\Delta N(t_k):=N(t_k^+)-N(t_k) =I_k(N(t_k)),\q   k=1,2,\dots,
\end{split}
\end{equation}
where all the coefficients and delays are periodic functions with a common period $\omega>0$ and $0<t_1<t_2<\dots$ with $t_{k+p}=t_k+\omega,\, k=1,2,\dots $ for some positive integer $p$.

The investigation in this paper was partially inspired by the works of  Tang  \cite{9.},  Yan  \cite{13.} and Zhang  \cite{19.}. Another strong motivation to study the stability of impulsive models of the form \eqref{1.1)} was to apply criteria of stability for such models to the  Lasota-Wazewska impulsive system \eqref{1.4)}, and obtain generalisations of Liu and Takeuchi's result in  \cite{6.}.
\med

Often the entire space BPC is not a suitable set of initial conditions, and more restrictive sets should be considered.
A set $S\subseteq BPC$ is called an {\it admissible set of initial conditions} if
$$
\varphi\in S\Rightarrow x_t(\cdot,t_0,\varphi)\in S,\q t\geq t_0.
$$
For models from mathematical biology as \eqref{1.4)}, clearly only positive solutions are meaningful, and therefore admissible.
In this paper we establish sufficient conditions for the stability of the zero solution of the impulsive DDE \eqref{1.1)}.
These results can  be applied to  study  the stability of other solutions, such as periodic solutions. We recall here some stability definitions for an admissible set of initial conditions $S\subseteq BPC$.

\begin{definition} Let $f(t,0)= 0$ for $t\ge 0$ and $I_k(0)=0,\, k\in\N$. 
	We say that the solution $x\equiv0$ of \eqref{1.1)} is {\bf stable} in $S$ if for any $\varepsilon>0$ and $t_0\geq0$, there exists $\delta=\delta(t_0,\varepsilon)>0$ such that
	$$
	||\varphi||<\delta\Rightarrow|x(t,t_0,\varphi)|<\varepsilon,\q{\rm for }\q t\geq t_0,\, \varphi\in S.
	$$
	The solution $x\equiv0$ of \eqref{1.1)} is said to be {\bf asymptotically stable} in $S$ if it is stable and for any $t_0\geq0$, there exists $\delta=\delta(t_0)>0$ such that
	$$\varphi\in S,\, 
	||\varphi||<\delta\Rightarrow|x(t,t_0,\varphi)|\to 0\q {\rm as \q }t\to\infty.
	$$
	The solution $x\equiv0$ of \eqref{1.1)} is said to be {\bf globally attractive} in $S$ if all solutions of \eqref{1.1)} with initial conditions in $S$ tend to zero as $t\to\infty$.
	Finally,   the solution $x\equiv0$ of \eqref{1.1)} is  {\bf globally asymptotically stable} if it is stable and global attractive.   If it is well understood which set $S$ we are dealing with, we omit  the reference to it.
\end{definition}

The remainder of this paper is organized in two sections. Section \ref{seccao-estabilidade} deals with the stability and global asymptotic stability of the  zero solution of the impulsive DDE \eqref{1.1)}. First, a main set of  assumptions for \eqref{1.1)} is introduced, and a brief comparison with other hypotheses considered in the literature is presented. Sufficient conditions  for  the global attractivity of zero are established by treating  separately non-oscillatory  and   oscillatory solutions of \eqref{1.1)}.
In Section \ref{seccao-modelo-LasotaWazewska}, the global asymptotic stability of  a positive  $\omega$-periodic solution to \eqref{1.4)} is studied by using the results in Section \ref{seccao-estabilidade}. The particular case of constant delays $\tau_i(t)=m_i\omega$ for $m_i$ positive integers ($1\le i\le n$), is further analysed; in this situation, better results are obtained when the impulses  in \eqref{1.4)} are given by linear  functions  $I_k(u)=b_ku$. A comparison of our criteria with recent results in the literature is also included.

\section{Stability}\label{seccao-estabilidade}
\setcounter{equation}{0}

In this section, we address the stability and global attractivity of the trivial solution of \eqref{1.1)} in $BPC$, but another admissible set  of initial conditions $S\subseteq BPC$ can be chosen.

The main assumptions that will be  imposed are taken from the ones described below. Hypothesis (H3)  will be chosen in alternative to  (H2).
Occasionally,  weaker  versions of these assumptions  will  be considered.

\begin{itemize} 
	
	\item[(H1)] 
	there exist positive sequences $(a_k)$ and $(b_k)$ such that
	\begin{equation}\label{2.1} b_kx^2\leq x[x+I_k(x)]\leq a_kx^2,\q x\in\R,\ k\in\N;
	\end{equation}
	
	\item[(H2)]  (i) the sequence $P_n=\dps \prod_{k=1}^{n}a_k$ is bounded;
	(ii) $\dps \int_0^{\infty} a(u)\, du=\infty$;

	\item[(H3)] (i)  the sequence $P_n=\dps \prod_{k=1}^{n}a_k$ is convergent; 
	\item[] (ii)  if $w:[0,\infty)\to\R$ is a bounded, non-oscillatory and piecewise differentiable function with $w'(t)w(t)\le 0$  on $(t_k,t_k+1)$, $k\in\N$, and
	$\dps\lim_{t\to\infty}w(t)=c\neq0$, then
	$$ \int_0^\infty f(s,w_s)\, ds=-\rm{sgn} (c)\infty;$$
	
	\item[(H4)] there exist piecewise continuous functions
	$\lambda_1,\lambda_2:[0,\infty)\to[0,\infty)$
	such that
	\begin{equation}\label{2.2} 
	-\lambda_1(t){\mathcal M}_t(\varphi)\leq f(t,\varphi)\leq\lambda_2(t){\mathcal M}_t(-\varphi),\q t\geq0,\, \varphi\in PC(t),
	\end{equation}
	where ${\mathcal M}_t(\varphi):=
	\dps\max\left\{0,\sup_{\theta\in[-\tau(t),0]}\varphi(\theta)\right\}$
	is the Yorke's functional on $PC(t)$;

	\item[(H5)]  there exists $T>0$ with $T-\tau(T)\ge 0$ such that 
	$$\alpha_1\alpha_2<1,
	$$    
	where the coefficients
	$\al_i:=\al_i(T)$ are
	given by
	\begin{equation}\label{2.3} 
	\alpha_i=\sup_{t\geq T}
	\int_{t-\tau(t)}^t\lambda_i(s)e^{-\int_s^ta(u)\, du}B(s)\, ds,\q i=1,2,
	\end{equation}
	with    $B(t):=\dps\max_{\theta\in[-\tau(t),0]}\bigg(\prod_{k:t+\theta\leq t_k<t}b_k^{-1}\bigg)$.
	
\end{itemize} 

We observe that  the hypotheses (H1) and (H4) imply $I_k(0)=0$  and $f(t,0)=0$ for $k\in\N,\, t\ge 0$, thus $x=0$ is an equilibrium point of \eqref{1.1)}. In (H5) above, we make use of  the standard convention that the product $B(t)$ is equal to one if the number of factors is zero.

In the remainder of this paper, we shall use the notation
\begin{equation}\label{2.4} 
A(t)= \int_0^{t} a(u)\, du,\q t\ge 0. \end{equation}
We shall consider either the set of  conditions (H2), or 
alternatively  the requirements in  (H3). Condition (H2)(ii) translates as $\lim\limits_{t\to\infty} A(t)=\infty$ and is fulfilled in many interesting models in the literature; in this case, instead of (H3)(i) it will be sufficient to impose (H2)(i).
However, it is useful to consider the alternative hypothesis (H3)(ii), which in particular allows  dealing with \eqref{1.2)}.  
The constraint on the impulses given by  (H1)  implies in particular that $x(t_k^-)x(t_k^+)>0$ if $x(t_k^-)\ne 0$, with lower and upper bounds $b_k\le x(t_k^+)/x(t_k^-)\le a_k$ for $k=1,2,\dots$.

We now compare our hypotheses with the ones in the literature, in particular  in  references  \cite{13.,19.}, two 
major sources of inspiration for the analysis in this section.
As far as we know, the hypotheses on the impulses, (H1) and either (H2)(i) or (H3)(i), are novel and strongly relax the usual requirements in the  literature on stability for impulsive DDEs under Yorke-type conditions. In fact, typically either $I_k$ are supposed to be linear functions, or condition \eqref{2.1} is assumed with $a_k=1$ for all $k\in\N$, i.e.,
\begin{equation}\label{2.5} 
b_kx^2\leq x[x+I_k(x)]\leq x^2,\q x\in\R,\, k\in\N,
\end{equation} 
which implies $|x(t_k^+)|\le |x(t_k^-)|$, hence forces the solutions to approach zero at each instant of impulse  \cite{3.,HuoLiLui,13.,16.,17.,19.}.

Zhang  \cite{19.} treated only the case of system \eqref{1.2)}, and proved the global attractivity of its zero solution  provided that the impulsive functions $I_k$ and $f$ satisfy \eqref{2.5},  (H3)(ii), (H4),  and  the additional
generalised ``${3\over 2}$-type condition":
\begin{equation}\label{2.6} 
\al_1 \al_2<(3/2)^2
\end{equation} 
where $\al_1,\al_2$ are as in \eqref{2.3} with $a(t)\equiv 0$. 
In  \cite{13.}, Yan considered \eqref{1.1)} with a set of more restrictive assumptions: again, the impulsive functions $I_k$ were subject to condition \eqref{2.5},  the Yorke condition (H4) was imposed with $\la_1(t)=\la_2(t)=:\la(t)$ and an extra condition  to deal with non-oscillatory solutions was added; moreover,
instead of hypothesis (H5) or \eqref{2.6}, for the global attractivity of the trivial solution of \eqref{1.1)} Yan imposed the restriction
\begin{equation}\label{2.7} 
\sigma:=\sup_{t\geq 0}
\int_{t-\tau(t)}^t\lambda(s)e^{\int_{t-\tau(t)}^sa(u)du}B(s)\, ds < {3\over 2}\, ,
\end{equation} 
where    $B(t)$ is defined as in (H5). 
In the case  $\la_1(t)=\la_2(t)=\la(t)$, it is clear that $\al_1=\al_2\le \sigma$ for $\al_1,\al_2$ given by \eqref{2.3}, with $\al_1=\al_2< \sigma$ if $a(t)\not\equiv 0$; there are however positive functions $a(t)$ for which the condition $\sigma< {3\over 2}$ is less restrictive than $\al_1\al_2<1$. In this situation, it would be convenient to achieve stability results  under hypotheses similar to or less restrictive than \eqref{2.7}. This will be the subject of a forthcoming paper. Nonetheless we should emphasise that  the main idea  here was to take full advantage of the negative feedback term $a(t)x(t)$, rather than working with a ${3\over 2}$-type condition. This kind of approach has also been taken for non-impulsive DDEs: we shall refer later in this section to the work of  Tang  \cite{9.} (see also  \cite{7., 10.} for some alternative criteria), where  the DDE $x'(t)+ca(t)x(t)=f(t, x_t)$, with the functions $a(t), \tau(t), f(t,x_t)$ as in \eqref{1.1)} and $c$ a positive constant, was studied assuming (H2)(ii) and the following Yorke condition:
$$-a(t){\mathcal M}_t(\varphi)\leq f(t,\varphi)\leq a(t){\mathcal M}_t(-\varphi),\q t\geq0,\, \varphi\in C(t).
$$

We start our analysis with an auxiliary result from  \cite{13.}.
Let  $x(t)$ be a solution of \eqref{1.1)} on $[0,\infty)$ and define $y(t)$  by
\begin{equation}\label{2.8} 
y(t)=\prod_{k:0\leq t_k<t}J_k(x(t_k))x(t),
\end{equation} 
where, for each $k\in\N$,
$$
J_k(u):={u\over {u+I_k(u)}},\q u\in\R\setminus\{0\}.
$$

\begin{lemma}\label{lem2.1} 
	\cite{13.}  If $x(t)$ is a solution of \eqref{1.1)} on $[0,\infty)$, then the function $y(t)$ defined by \eqref{2.8} is a continuous function satisfying
	\begin{equation}\label{2.9} 
	y'(t)+a(t)y(t)=\prod_{k:0\leq t_k<t}J_k(x(t_k))f(t,x_t),\q t\geq0,\ t\neq t_k.
	\end{equation} 
\end{lemma}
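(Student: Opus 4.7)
The plan is a direct verification that exploits how the multiplicative correction $J_k(x(t_k))=x(t_k)/x(t_k^+)$ is engineered precisely to cancel the jump of $x$ at the impulse time $t_k$.

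First, I would check continuity of $y$ at each impulse time. Set $t_0:=0$ and, on the interval $(t_{k-1},t_k]$, denote the (locally constant) product $P_{k-1}:=\prod_{j:0\le t_j<t}J_j(x(t_j))$. Since $x$ is left continuous, $y(t_k^-)=P_{k-1}x(t_k)$. For $t$ slightly greater than $t_k$, the running product gains exactly the factor $J_k(x(t_k))$, so
\begin{equation*}
y(t_k^+)=P_{k-1}J_k(x(t_k))x(t_k^+)=P_{k-1}\,\frac{x(t_k)}{x(t_k)+I_k(x(t_k))}\,x(t_k^+)=P_{k-1}x(t_k),
\end{equation*}
since $x(t_k^+)=x(t_k)+I_k(x(t_k))$ by the impulse rule in \eqref{1.1)}. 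Hence $y(t_k^+)=y(t_k^-)$. Continuity at all other points is immediate because $x$ is continuous there and the product is locally constant.

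Next, on each open interval $(t_{k-1},t_k)$ the product $P_{k-1}$ is constant in $t$, so $y(t)=P_{k-1}x(t)$ is differentiable and
\begin{equation*}
y'(t)=P_{k-1}x'(t)=P_{k-1}\bigl[-a(t)x(t)+f(t,x_t)\bigr]=-a(t)y(t)+P_{k-1}f(t,x_t),
\end{equation*}
which is exactly \eqref{2.9} at every $t\neq t_k$.

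The only delicate point is making sense of $J_k(x(t_k))$ when $x(t_k)=0$; but then $I_k(0)=0$ forces $x(t_k^+)=0$, so the factor $P_{k-1}x(t_k)$ itself vanishes and $y(t_k^\pm)=0$ regardless of how $J_k(0)$ is defined (the natural convention $J_k(0):=1$ is compatible with \textbf{(H1)}). Away from this degenerate case, hypothesis \textbf{(H1)} guarantees $x(t_k^+)$ and $x(t_k)$ share the same sign and are both nonzero, so $J_k(x(t_k))$ is well-defined and strictly positive. No step is hard; the argument is essentially a bookkeeping check that the telescoping factors do their intended job.
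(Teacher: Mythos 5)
Your proof is correct and is the standard telescoping verification: the factor $J_k(x(t_k))=x(t_k)/x(t_k^+)$ exactly cancels the jump of $x$ at $t_k$, and away from the impulse times the product is locally constant so the differential equation carries over; your handling of the degenerate case $x(t_k)=0$ via $I_k(0)=0$ is also fine. Note that the paper gives no proof of its own here (it simply cites Yan \cite{13.}), so there is nothing to contrast with; your argument is the expected one.
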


To prove the global asymptotic stability of the trivial solution, we  consider separately oscillatory and non-oscillatory solutions to \eqref{1.1)}. We recall that  a solution $x(t)$  is oscillatory if it is not eventually zero and 
has arbitrarily large zeros;  otherwise, $x(t)$ is non-oscillatory.
First, we establish  criteria about the asymptotic behaviour of all non-oscillatory solutions.

\begin{lemma}\label{lem2.2} 
	Assume (H1),  (H2)(i) and \vskip 0.1in
	(H4*) for $t\geq0$ and $\varphi\in PC(t)$,
	$ f(t,\varphi)\leq0$ if $\varphi \geq0$ and $ f(t,\varphi)\geq0$ if $\varphi \leq 0.$
	\vskip 0.1in
	\noindent  Then, all non-oscillatory solutions of \eqref{1.1)} are bounded. If in addition (H2)(ii) holds,
	then    all non-oscillatory solutions of \eqref{1.1)}  converge to zero as $t\to\infty$.
\end{lemma}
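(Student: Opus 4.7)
The plan is to use the continuous transformation $y(t)=\prod_{k:0\leq t_k<t}J_k(x(t_k))\,x(t)$ from Lemma \ref{lem2.1} to absorb the impulses and reduce the problem to a smooth linear differential inequality. Assume without loss of generality that the non-oscillatory solution $x$ is eventually positive, say $x(t)>0$ for $t\geq T$; then I pick $T'\geq T$ with $t-\tau(t)\geq T$ for all $t\geq T'$, which exists because $t-\tau(t)\to\infty$. The eventually-negative case is entirely symmetric, by the sign symmetry built into (H4*).

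First I would extract pointwise control of the impulsive jumps. Dividing the defining inequality $b_ku^2\leq u(u+I_k(u))\leq a_ku^2$ in (H1) by $u^2$ for $u\neq 0$ gives $b_k\leq (u+I_k(u))/u\leq a_k$, whence $J_k(u)=u/(u+I_k(u))\in[1/a_k,\,1/b_k]$ is positive. Consequently $y(t)>0$ for $t\geq T'$. Since $x_s\geq 0$ on $[t-\tau(t),t]$ for such $t$, hypothesis (H4*) forces $f(t,x_t)\leq 0$, and Lemma \ref{lem2.1} yields
\[
y'(t)+a(t)y(t)=\Bigl(\prod_{k:0\leq t_k<t}J_k(x(t_k))\Bigr)f(t,x_t)\leq 0,\q t\geq T',\ t\neq t_k.
\]
Multiplying by the integrating factor $e^{A(t)}$ and using the continuity of $y$ on $[0,\infty)$, I conclude that $e^{A(t)}y(t)$ is continuous and non-increasing on $[T',\infty)$, so $0<y(t)\leq e^{A(T')-A(t)}y(T')\leq y(T')$.

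Next I transfer this back to $x$ through $x(t)=y(t)/\prod_{k:0\leq t_k<t}J_k(x(t_k))$, splitting the cumulative product at $T$. The finite part over $t_k<T$ is a fixed positive constant (with the convention $J_k(0):=1$ covering any zero values of $x(t_k)$ on this bounded range), while the tail satisfies $\prod_{k:T\leq t_k<t}J_k(x(t_k))\geq\prod_{k:T\leq t_k<t}a_k^{-1}$. By (H2)(i), $P_n=\prod_{k=1}^n a_k$ is uniformly bounded, so $\prod_{k:T\leq t_k<t}a_k\leq\sup_n P_n\big/\prod_{k:t_k<T}a_k$ is bounded, and hence the full product $\prod_{k:0\leq t_k<t}J_k(x(t_k))$ is bounded below by some positive constant $C$. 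Therefore $0<x(t)\leq y(t)/C\leq y(T')/C$ on $[T',\infty)$, and boundedness of $x$ on $[0,T']$ is automatic from the existence of the solution. For the convergence statement, (H2)(ii) gives $A(t)\to\infty$, so $y(t)\leq e^{A(T')-A(t)}y(T')\to 0$, and hence $x(t)\leq y(t)/C\to 0$.

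The main obstacle, and the place where hypothesis (H2)(i) enters essentially, is the last step: keeping $\prod J_k(x(t_k))$ bounded away from zero. Without uniform boundedness of $P_n$, the contraction of $y$ to zero driven by the negative feedback $a(t)$ could be overwhelmed by an unbounded denominator relating $x$ to $y$. This is the precise point where the paper's relaxation from $a_k=1$ (the customary hypothesis in the literature) to (H1) combined with (H2)(i) does real work and must be paid for.
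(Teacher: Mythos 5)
Your proof is correct and follows essentially the same route as the paper's: the change of variables $y$ from Lemma \ref{lem2.1}, the sign argument from (H4*) giving $(e^{A(t)}y(t))'\le 0$, and the lower bound $\prod_{k:0\le t_k<t}J_k(x(t_k))\ge 1/\sup_n P_n$ coming from (H1) and (H2)(i) to transfer boundedness and decay back to $x$ (the paper's inequality \eqref{2.11}). The only cosmetic difference is your explicit convention $J_k(0):=1$, which the paper leaves implicit.
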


\begin{proof} 
	From  (H1), we have
	\begin{equation}\label{2.10} 
	a_k^{-1}\leq J_k(u)\leq b_k^{-1}\q{\rm  for }\q u\neq0,\,\,k\in\N.
	\end{equation} 
	Take  a solution $x(t)$ of \eqref{1.1)} and let $y(t)$ be defined by \eqref{2.8}.
	From \eqref{2.10}, we have
	\begin{equation}\label{2.11} 
	|x(t)|=|y(t)|\prod_{k:0\leq t_k<t}J_k^{-1}(x(t_k))\leq|y(t)|\prod_{k:0\leq t_k<t}a_k,
	\end{equation} 
	For  a non-oscillatory solution $x(t)$, assume that $x(t)>0$ for $t\gg 0$ (the situation is analogous if $x(t)<0$ for $t\gg 0$). 
	Then $y(t)>0$ for large $t$ and, from \eqref{2.9} and (H4*), $y'(t)\le y'(t)+a(t)y(t)\leq0$ for  $t\gg0, t\neq t_k$. In particular, there are $c_0,w\ge 0$ such that $y(t)\searrow c_0$ and  $e^{A(t)}y(t)\searrow w$ as $t\to\infty$, where $A(t)$ is as in \eqref{2.4}. On the other hand, from (H2)(i) and  \eqref{2.11} we have
	$0<x(t)\le My(t)$ for $ t\gg 0$, where  $M>0$ is such that $P_k=\prod_{i=1}^ka_i\le M$ for $k\in\N$. Consequently $x(t)$ is bounded.
	Moreover, if  (H2)(ii) holds we deduce that $\lim_{t\to \infty}x(t)= \lim_{t\to \infty}y(t)=0$.
\end{proof}

When (H2)(ii) is not satisfied, the convergence to zero of non-oscillatory solutions is obtained if (H3)(ii) is imposed and, rather than having $P_k$ simply bounded,  $P_k$ is required to be convergent.

\begin{lemma}\label{lem2.3} 
	Assume (H1), (H4*), and either (H2) or (H3). 
	Then, all non-oscillatory solutions of \eqref{1.1)} converge to zero as $t\to\infty$.
\end{lemma}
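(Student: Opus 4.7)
My plan is to reduce to Lemma \ref{lem2.2} whenever possible, and otherwise combine a monotone-rescaling trick with the non-degeneracy condition (H3)(ii). Since (H3)(i) is strictly stronger than (H2)(i) (a convergent sequence is bounded), Lemma \ref{lem2.2} already handles the case when (H2)(ii) is in force. The remaining case to treat is when (H1), (H4*), (H3) hold while $A(\infty) := \int_0^\infty a(s)\, ds < \infty$.

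Fix a non-oscillatory solution $x$ and suppose without loss of generality that $x(t)>0$ for $t$ sufficiently large (the case $x<0$ is handled symmetrically via the second half of (H4*)). Since $t-\tau(t)\to\infty$, eventually $x_t\geq 0$, so (H4*) yields $f(t,x_t)\leq 0$. Define the auxiliary function $y$ by \eqref{2.8}; by Lemma \ref{lem2.1}, $y$ is continuous and satisfies \eqref{2.9}, where I set $P(t):=\prod_{k:\,0\leq t_k<t}J_k(x(t_k))$. Since $a,y\geq 0$ and the right-hand side of \eqref{2.9} is nonpositive, $y'\leq 0$; hence $y(t)\searrow c_0$ for some $c_0\geq 0$.

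The crux is to show that $\lim_{t\to\infty} x(t)$ exists. For this I would introduce the rescaling
\[
z(t):=\frac{x(t)}{\prod_{k:\,0\leq t_k<t}a_k}.
\]
Between consecutive impulses the denominator is constant and $x'(t)\leq -a(t)x(t)\leq 0$, so $z'\leq 0$ there. At each impulse, the bound $x(t_k^+)\leq a_k x(t_k)$ from (H1) yields $z(t_k^+)\leq z(t_k)$. Thus $z$ is nonincreasing and nonnegative, hence $z(t)\searrow z_\infty\geq 0$. Invoking (H3)(i), the denominator converges to $P:=\lim_n P_n>0$, whence $x(t)\to z_\infty P=:c\geq 0$.

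If $c=0$ the proof is complete. Otherwise $c>0$, and I derive a contradiction as follows. Integrating \eqref{2.9} on $[T,\infty)$, and using the continuity of $y$, gives
\[
\int_T^\infty a(s)y(s)\, ds+\int_T^\infty P(s)\,[-f(s,x_s)]\, ds = y(T)-c_0<\infty,
\]
so in particular the second integral is finite. Because $P_n$ is bounded (it is convergent) and $P(s)\geq P_n^{-1}$ for $s\in(t_n,t_{n+1}]$, $P(s)$ is bounded below by a positive constant; therefore $\int_T^\infty [-f(s,x_s)]\, ds<\infty$. On the other hand, $x$ is bounded (from $x\leq(\sup_n P_n)\cdot y$), eventually positive, piecewise differentiable, satisfies $x'(t)x(t)\leq 0$ on each interval $(t_k,t_{k+1})$, and has $\lim x=c>0$. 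Thus (H3)(ii) forces $\int_0^\infty f(s,x_s)\, ds=-\infty$, contradicting the previous bound. Hence $c=0$.

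The main obstacle is the third step: establishing that $\lim_{t\to\infty}x(t)$ exists despite the fact that impulses may be destabilising (i.e.\ $a_k>1$ is permitted). The rescaling by $\prod a_k$ is exactly what (H3)(i) supports, and it is precisely this convergence, rather than mere boundedness, that turns the possibly-oscillating impulse effects into a monotone quantity $z(t)$ whose limit can then be lifted back to $x$ and fed into the contradiction produced by (H3)(ii).
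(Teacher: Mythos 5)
Your proof is correct. The overall skeleton coincides with the paper's: dispose of the case $\int_0^\infty a(u)\,du=\infty$ via Lemma \ref{lem2.2}, then, under (H3) with $\int_0^\infty a(u)\,du<\infty$, show that $x(t)$ has a limit $c\ge 0$ and rule out $c>0$ using (H3)(ii). Where you genuinely diverge is in establishing that $\lim_{t\to\infty}x(t)$ exists. The paper first bounds $\limsup x\le \gamma c_0$ and then compares $c_+=\limsup x$ and $c_-=\liminf x$ by a subsequence argument at the impulse instants, iterating (H1) to get $x\big(t_{n_1(k)}^+\big)\le \big(P_{n_1(k)}P_{n_2(k)-1}^{-1}\big)x\big(t_{n_2(k)}\big)$ and using the convergence of $P_n$ to force the ratio to $1$, whence $c_+\le c_-$. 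Your rescaling $z(t)=x(t)/\prod_{k:t_k<t}a_k$ packages the same information into a single monotone quantity: $z$ is nonincreasing both between impulses (since $x'\le 0$ there) and across them (since $x(t_k^+)\le a_kx(t_k)$), so it converges, and multiplying back by the convergent product recovers $\lim x$; this is cleaner and avoids the subsequence bookkeeping. For the contradiction when $c>0$ the two arguments are essentially the same: both integrate \eqref{2.9}, bound $\prod_k J_k(x(t_k))$ below by $\inf_n P_n^{-1}>0$ via \eqref{2.10}, and invoke (H3)(ii); the paper carries the weight $e^{A(s)}$ (harmless, as $1\le e^{A(s)}\le e^{A_0}$) while you drop it. One small point to patch: you assert $P:=\lim_n P_n>0$, but (H3)(i) only gives convergence and the limit may be $0$ — a case the paper handles separately via $0<x(t)\le P_ny(t)\to 0$. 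Your argument survives unchanged ($x=z\cdot\prod a_k\to z_\infty\cdot 0=0$ when $P=0$), but the claim as written is unjustified and should be split off.
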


\begin{proof} As above and without loss of generality, suppose that  $x(t)$ is an eventually positive solution of \eqref{1.1)}, and let $y(t)$ be defined by \eqref{2.8}. From the proof of Lemma \ref{lem2.2},
	$y(t)\searrow c_0,\ e^{A(t)}y(t)\searrow w$ as $t\to\infty$, with $c_0\ge 0$. If $\lim_{t\to\infty} A(t)=\infty$, by Lemma \ref{lem2.2} we have $\lim_{t\to\infty} x(t)=0$, hence we may suppose that 
	$\lim_{t\to\infty} A(t)=A_0\in [0,\infty)$, and that (H3) is satisfied.  Let $\lim P_k=\ga$. 
	
	For $t\gg 0$,
	$0<x(t)\leq\Big(\prod_{k:0\leq t_k<t}a_k\Big)y(t),
	$ 
	and by (H3)(i)
	$$
	c_+:=\limsup_{t\to\infty}x(t)\leq\gamma c_0,\q c_-:=\liminf_{t\to\infty}x(t)\geq0.
	$$
	We now show that $c_+=c_-=0$.

	If $\ga=0$, clearly $c_+=c_-=0$. Now, suppose $\ga >0$. Since $x(t)>0$ for large $t$, by \eqref{1.1)} and (H4*) we have $x'(t)\leq0$ on $(t_k,t_{k+1})$ for large $k$, which implies that there exist subsequences $(t_{n_1(k)})_k$ and $(t_{n_2(k)})_k$ of $(t_k)_k$, with $n_2(k)\leq n_1(k)$, such that $x\left(t^+_{n_1(k)}\right)\to c_+$ and $x\left(t_{n_2(k)}\right)\to c_-$. From \eqref{1.1)} and (H1), we have
	\begin{equation*}
	\begin{split}
	x\left(t_{n_1(k)}^+\right)& = x\left(t_{n_1(k)}\right)+I_{n_1(k)}\Big(x(t_{n_1(k)})\Big)  \\
	&\leq a_{n_1(k)}x\left(t_{n_1(k)}\right)\leq a_{n_1(k)}x\big(t^+_{n_1(k)-1}\big),
	\end{split}
	\end{equation*}      
	and, by iteration,
	\begin{equation}\label{2.12}
	x\left(t_{n_1(k)}^+\right) \leq \left(\prod_{i=n_2(k)}^{n_1(k)}a_i
	\right)x\left(t_{n_2(k)}\right).
	\end{equation}      
	Since
	$$
	\prod_{i=n_2(k)}^{n_1(k)}a_i=P_{n_1(k)}P_{n_2(k)-1}^{-1}\to\gamma\gamma^{-1}=1\q{\rm as}\q n\to\infty,
	$$
	from \eqref{2.12} it follows that $c_+\leq c_-$, and consequently $c_+=c_-:=c$. Hence $x(t)\to c\geq0$ as $t\to\infty$. If $c>0$,  define $\varepsilon:=\dps\inf_{n\in\N}\left(\prod_{i=1}^na_i^{-1}\right)>0$. From \eqref{2.9} and \eqref{2.10}, we obtain
	\begin{equation}\label{2.13}
	\begin{split}
	y(t)e^{A(t)}&=y(0)+\int_0^te^{A(s)}\Big (\prod_{k:0\leq t_k<s}J_k(x(t_k))\Big )f(s,x_s)\, ds\\
	&\leq y(0)+\vare \int_{0}^te^{A(s)}f(s,x_s)\, ds.
	\end{split}
	\end{equation}      
	But by (H3)(ii), $\int_0^\infty f(s,x_s)\, ds=-\infty$, hence  $\int_0^\infty e^{A(s)}f(s,x_s)\, ds=-\infty$ as well, and from \eqref{2.13} we get $w=-\infty$,
	which is a contradiction. Thus $c=0$, and the proof is complete.
\end{proof}

\begin{remark}\label {rmk2.1} \rm{ It is easy to verify from the above proof  that the assumption (H3)(ii) is not needed at all if (H3)(i) holds with $\lim_n\Big(\prod_{k=1}^na_k\Big)=0$.}
\end{remark}

The goal now is to show that (H1), (H4), (H5) are sufficient conditions for the trivial equilibrium to be a global attractor of the oscillatory solutions of \eqref{1.1)}. 
A first auxiliary result is crucial to establish upper and lower bounds for  oscillatory solutions, and was
inspired in arguments of   \cite{19.}. 
%
%
%
%
\begin{lemma}\label{lem2.4}  
	Assume (H1), (H4),  and 
	$$\alpha_1\alpha_2\le 1
	$$
	for some $\al_1=\al_1(T),\al_2=\al_2(T)$ as in \eqref{2.3}. Let  $y(t)$ be a solution of \eqref{2.9} on $[0,\infty)$ and  $t_0\ge T$ such that $y(t_0)=0$. 
	Then, for any $\eta>0$, the following conditions hold:\vskip 0.1cm
	(i) If $-\eta\le y(t)\le \eta \alpha_2$ for $t\in[t_0-\tau(t_0),t_0]$, then $-\eta\le y(t)\leq\eta \alpha_2$ for all $t>t_0$;
	\vskip 0.1cm
	(ii) If $-\eta \alpha_1\le y(t)\le \eta$ for $t\in[t_0-\tau(t_0),t_0]$, then $-\eta \alpha_1\leq y(t)\le \eta$ for all $t>t_0$.
\end{lemma}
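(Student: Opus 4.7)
The plan is to argue by contradiction using a first-exit-time argument combined with integration of \eqref{2.9} against the integrating factor $e^{A(t)}$. Parts (i) and (ii) are symmetric; I focus on (i), whose two sub-cases (upper and lower bound being violated) share the same preliminary comparisons.

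By Lemma \ref{lem2.1} the function $y$ is continuous on $[0,\infty)$, and by (H1) each factor $J_k(u) = u/(u+I_k(u))$ lies in $[a_k^{-1}, b_k^{-1}]$ and is positive, so $\mathcal{J}(t) := \prod_{k: 0 \le t_k < t} J_k(x(t_k)) > 0$ and $y(t)$ and $x(t)$ share the same sign. The telescoping identity $\mathcal{J}(s+\theta)^{-1} = \mathcal{J}(s)^{-1}\prod_{k: s+\theta \le t_k < s} J_k(x(t_k))$, together with $J_k \le b_k^{-1}$ and the definition of $B(s)$, yields the key comparisons
\[
\mathcal{M}_s(-x_s) \le \mathcal{J}(s)^{-1} B(s)\, \mathcal{M}_s(-y_s), \qquad \mathcal{M}_s(x_s) \le \mathcal{J}(s)^{-1} B(s)\, \mathcal{M}_s(y_s).
\]

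Assume (i) fails and set $\tilde{t} := \inf\{t > t_0 : y(t) > \eta\alpha_2 \text{ or } y(t) < -\eta\}$. By continuity, $-\eta \le y \le \eta\alpha_2$ on $[t_0, \tilde{t}]$ and $y(\tilde{t}) \in \{-\eta, \eta\alpha_2\}$. In the upper-bound case $y(\tilde{t}) = \eta\alpha_2$, I pick $\tau > \tilde{t}$ close enough that $y(\tau) > \eta\alpha_2$ and $y > 0$ on $(t_1, \tau]$, where $t_1$ is the largest zero of $y$ in $[t_0, \tilde{t}]$. Integrating \eqref{2.9} from $t_1$ to $\tau$ (with $y(t_1)=0$) and invoking (H4) together with the comparison above gives
\[
y(\tau) e^{A(\tau)} \le \int_{t_1}^{\tau} e^{A(s)} \lambda_2(s) B(s)\, \mathcal{M}_s(-y_s)\, ds.
\]
The crucial observation is that $\mathcal{M}_s(-y_s) = 0$ as soon as the delay window $[s-\tau(s), s]$ lies inside the positive excursion $(t_1, \tau]$; for the remaining $s$, forming an interval $[t_1, \bar{s}]$ with $\bar{s}-\tau(\bar{s}) \le t_1$, one has $\mathcal{M}_s(-y_s) \le \eta$, since $y \ge -\eta$ throughout $[t_0-\tau(t_0), \tau]$ by the hypothesis, the definition of $\tilde{t}$, and the positivity of $y$ on $(\tilde{t}, \tau]$. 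As $[t_1, \bar{s}] \subseteq [\bar{s}-\tau(\bar{s}), \bar{s}]$ and $\bar{s} \ge T$, the definition of $\alpha_2$ produces
\[
y(\tau) e^{A(\tau)} \le \eta \int_{\bar{s}-\tau(\bar{s})}^{\bar{s}} e^{A(s)} \lambda_2(s) B(s)\, ds \le \eta\alpha_2 e^{A(\bar{s})} \le \eta\alpha_2 e^{A(\tau)},
\]
contradicting $y(\tau) > \eta\alpha_2$.

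The lower-bound case $y(\tilde{t}) = -\eta$ proceeds analogously using $f(s, x_s) \ge -\lambda_1(s)\mathcal{M}_s(x_s)$ and the bound $y \le \eta\alpha_2$ on $[t_0-\tau(t_0), \tilde{t}]$, which gives $\mathcal{M}_s(y_s) \le \eta\alpha_2$; the same scheme yields $y(\tau) \ge -\eta\alpha_1\alpha_2 \ge -\eta$ via the hypothesis $\alpha_1\alpha_2 \le 1$, contradicting $y(\tau) < -\eta$. Part (ii) is obtained by the symmetric argument with the two sub-cases interchanged: its lower-bound case uses only $\alpha_1$, while its upper-bound case invokes $\alpha_1\alpha_2 \le 1$. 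The main technical obstacle is the configuration $t_1 < \tilde{t} - \tau(\tilde{t})$, in which the excursion of $y$ is longer than the delay window and a crude estimate fails; this is precisely where the vanishing-$\mathcal{M}_s$ observation becomes essential, restricting the effective integration to $[t_1, \bar{s}]$ embedded inside the single window $[\bar{s}-\tau(\bar{s}), \bar{s}]$ and thereby allowing the $\alpha_2$-bound (respectively the $\alpha_1$-bound) to apply.
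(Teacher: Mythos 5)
Your argument is correct and shares the paper's overall skeleton: argue by contradiction at a first exit time, locate the last zero of $y$ before the violation, integrate the differential inequality for $e^{A(t)}y(t)$ coming from \eqref{2.9}, (H4) and the comparison $\mathcal{M}_s(-x_s)\le B(s)\,\mathcal{M}_s(-y_s)\prod_{k:0\le t_k<s}J_k(x(t_k))^{-1}$, and close with the definition of $\alpha_2$ (respectively with $\alpha_1\alpha_2\le 1$ for the lower bound). The one genuine divergence is in how the integral is confined to a single delay window so that the $\alpha_i$ can be invoked. The paper proves that the last zero $\xi_0$ of $y$ preceding the violation point $T_0$ must already lie in $[T_0-\tau(T_0),T_0]$: were there no zero in that window, one would have $\mathcal{M}_t(-x_t)=0$ and hence $y'\le -a\,y\le 0$ on $[T_0-\delta,T_0]$, so $y(T_0-\delta)\ge y(T_0)$, contradicting the choice of $T_0$ as a strict running maximum; the integral over $[\xi_0,T_0]\subseteq[T_0-\tau(T_0),T_0]$ is then dominated by $\alpha_2$ evaluated at $t=T_0$. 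You instead let the last zero $t_1$ sit anywhere and truncate the integral by noting that the Yorke functional vanishes once $s-\tau(s)>t_1$, so that, by the standing monotonicity of $t\mapsto t-\tau(t)$, the effective domain $[t_1,\bar s]$ fits inside the single window $[\bar s-\tau(\bar s),\bar s]$ and $\alpha_2$ applies at $t=\bar s$, the factor $e^{A(\bar s)}\le e^{A(\tau)}$ absorbing the change of base point. Both devices are sound: the paper's rules out the configuration $t_1<T_0-\tau(T_0)$ outright (exploiting that $T_0$ exceeds all earlier values), whereas yours accommodates that configuration and works for any first-exit point, at the cost of invoking the monotonicity of $t-\tau(t)$ explicitly. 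The lower-bound sub-case via $\alpha_1\alpha_2\le 1$ and part (ii) by symmetry match the paper.
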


\begin{proof}  We shall prove (i), the proof of (ii) being similar.
	If the assertion (i) is false,  there exists $T_0>t_0$ such that either $y(T_0)> \al_2 \eta$ or $y(T_0)<-\eta$. We consider these two situations separately. \smal
	
	{\it Case 1}. Suppose that $y(T_0)>\al_2 \eta$ for some $T_0>t_0$, with $-\eta\le y(t)<y(T_0)$ for $t\in [t_0,T_0)$.  \smal
	
	We first prove that there is $\xi_0\in [T_0-\tau(T_0),T_0]$ such that $y(\xi_0)=0$.  Otherwise, we obtain necessarily that $y(t)>0$ for $t\in [T_0-\de-\tau(T_0-\de),T_0]$ and some small $\de>0$ (recall that $y(t)$ is continuous), and from \eqref{2.9} and (H4) it follows that
	$$y'(t)\le -a(t)y(t)+ \prod_{k:0\leq t_k<t}J_k(x(t_k))\, 
	\lambda_2(t) {\mathcal M}_t(-x_t)
	\leq 0,\q t\in [T_0-\de,T_0],$$
	implying that $y(T_0-\delta)\geq y(T_0)$, which contradicts the definition of $T_0$.
	
	Choose $\xi_0\in [T_0-\tau(T_0),T_0]$ such that $y(\xi_0)=0$. We may suppose that $y(t)>0$ for $\xi_0<t<T_0$, thus $t_0\le \xi_0$.
	By \eqref{2.2}, \eqref{2.8}, \eqref{2.9}, and \eqref{2.10}, for $s\in[\xi_0,T_0]\setminus\{t_k\}$ we obtain
	\begin{equation}\label{2.14}
	\begin{split}
	\left(e^{A(s)}y(s)\right)'&=\prod_{k:0\leq t_k<s}J_k(x(t_k))e^{A(s)}f(s,x_s)\\
	&\leq  e^{A(s)}\lambda_2(s)\prod_{k:0\leq t_k<s}J_k(x(t_k)){\mathcal M}_s(-x_s)\\
	&=e^{A(s)}\lambda_2(s)\prod_{k:0\leq t_k<s}\!\!J_k(x(t_k))\cdot\\
	&\,\,\,\,\cdot\max\left\{0,\sup_{\theta\in[-\tau(s),0]}\left(-y(s+\theta)\prod_{k:0\leq t_k<s+\theta}J_k(x(t_k))^{-1}\right)\right\}\\
	&=e^{A(s)}\lambda_2(s) \max\left\{0,\sup_{\theta\in[-\tau(s),0]}\left(-y(s+\theta)\prod_{k:s+\theta\leq t_k<s}J_k(x(t_k))\right)\right\} \\
	&\leq  e^{A(s)}\lambda_2(s)B(s){\mathcal M}_s(-y_s),
	\end{split}
	\end{equation}	
	with $B(s)$ defined as in (H5).
	Now,  for $s\in [\xi_0,T_0], s\ne t_k$, we have $y_s(\th)\ge -\eta$ for $s\in [\xi_0,T_0],\th \in [s-\tau(s),s]$, thus 
	$$
	\left(e^{A(s)}y(s)\right)'\leq  \eta e^{A(s)}\lambda_2(s)B(s) .$$
	Integrating over $[\xi_0,T_0]$, we get
	\begin{equation*}
	\begin{split}
	y(T_0)&\le   \eta e^{-A(T_0)}\int_{\xi_0}^{T_0} e^{A(s)}\la_2(s) B(s)\, ds\\
	&  =  \eta \int_{\xi_0}^{T_0} e^{-\int_s^{T_0} a(u)\, du} \la_2(s) B(s)\, ds\le \al_2\eta,
	\end{split}
	\end{equation*}   
	which contradicts the definition of $T_0$.
	
	\med
	
	{\it Case 2}. Suppose that $y(T_0)<- \eta$ for some $T_0>t_0$, with $y(T_0)< y(t)\le \al_2\eta$ for $t\in [t_0,T_0)$.  \smal
	
	Reasoning as above, we deduce  that there is $\xi_0\in [t_0,T_0)\cap [T_0-\tau(T_0),T_0)$ such that $y(\xi_0)=0$ and $y(t)<0$ for  $\xi_0<t<T_0$. Since $y_t(\th)\le \al_2\eta$ for $t\in [\xi_0,T_0],t\ne t_k$, $\th \in [t-\tau(t),t]$, we now obtain
	$$
	\left(e^{A(t)}y(t)\right)'\ge -e^{A(t)}\lambda_1(t)B(t) \al_2\eta .$$
	Integrating over $[\xi_0,T_0]$, and using the inequality  $\al_1\al_2\le 1$,
	we get
	\begin{equation*}
	\begin{split}
	y(T_0)\ge &  - \al_2\eta e^{-A(T_0)}\int_{\xi_0}^{T_0} e^{A(s)}\la_1(s) B(s)\, ds\\
	= & - \al_2\eta  \int_{\xi_0}^{T_0} e^{-\int_s^{T_0} a(u)\, du} \la_1(s) B(s)\, ds\ge -\al_1\al_2\eta\ge -\eta,
	\end{split}
	\end{equation*}      
	which is not possible.
\end{proof}

Sufficient conditions for the boundedness of all solutions of \eqref{1.1)}
follow immediately from Lemmas  \ref{lem2.2}, \ref{lem2.3} and \ref{lem2.4}.

\begin{theorem}\label{thm2.1}
	Assume (H1), (H2)(i), (H4)   and $\alpha_1\alpha_2\le 1$, where $\al_1,\al_2$ are as in \eqref{2.3}. Then all solutions of \eqref{1.1)} are bounded. Furthermore, if in addition either (H2)(ii) or (H3) are satisfied, the zero solution of \eqref{1.1)} is (uniformly) stable.
\end{theorem}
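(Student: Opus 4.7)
The plan is to split the proof into two parts---boundedness of all solutions first, then stability of the zero solution under the additional hypothesis (H2)(ii) or (H3)---with each part further subdivided according to whether a given solution is oscillatory, matching the structure of the preceding lemmas.

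For boundedness, I will first observe that (H4) implies (H4*): when $\varphi\ge0$ we have $\mathcal{M}_t(-\varphi)=0$, so $f(t,\varphi)\le0$, and symmetrically $f(t,\varphi)\ge0$ for $\varphi\le0$. Non-oscillatory solutions are therefore bounded by Lemma \ref{lem2.2}. For an oscillatory $x(t)$, (H1) gives $J_k>0$, so the function $y(t)$ from \eqref{2.8} inherits the oscillation of $x$; I pick a zero $t_1\ge T$ of $y$ and set $\eta_0:=\sup_{s\in[t_1-\tau(t_1),t_1]}|y(s)|$, which is finite by continuity. With $\eta:=\eta_0\max\{1,\alpha_2^{-1}\}$, the hypothesis of Lemma \ref{lem2.4}(i) holds and its conclusion $-\eta\le y(t)\le\eta\alpha_2$ propagates to all $t\ge t_1$. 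Combined with boundedness of $y$ on the compact set $[t_0,t_1]$ and $|x(t)|\le M|y(t)|$ from \eqref{2.11} with $M:=\sup_n P_n<\infty$ by (H2)(i), this yields boundedness of $x$.

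For stability, I fix $\epsilon>0$ and $t_0\ge0$ and seek $\delta>0$ such that $\|\varphi\|<\delta$ forces $|x(t)|<\epsilon$ on $[t_0,\infty)$. Since $|x(t)|\le M|y(t)|$, it suffices to bound $|y(t)|$ by a multiple of $\|\varphi\|$. From \eqref{2.8} and \eqref{2.10}, $|y(t)|\le B_0\|\varphi\|$ on $[t_0-\tau(t_0),t_0]$, with $B_0:=\prod_{0\le t_k<t_0}b_k^{-1}<\infty$ depending only on $t_0$. For $t\ge t_0$ the dichotomy returns: if $y$ has a first zero $t_1\ge\max\{t_0,T\}$, then Lemma \ref{lem2.4}(i) propagates a bound proportional to $B_0\|\varphi\|$ from $[t_1-\tau(t_1),t_1]$ onto $[t_1,\infty)$, while on the gap $[t_0,t_1]$ the constant sign of $y$ together with (H4*) gives $(e^{A(t)}y(t))'\le 0$ on non-impulsive subintervals, bounding $|y|$ uniformly in $\varphi$; if $y$ has no zero beyond $\max\{t_0,T\}$, then $x$ is eventually of one sign and Lemma \ref{lem2.2} or \ref{lem2.3} (invoking (H2)(ii) or (H3), respectively) yields $y\to0$ together with a monotonicity bound on $|y|$. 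Putting the estimates together gives $|x(t)|\le K(t_0)\|\varphi\|$ for a finite constant $K(t_0)$, and $\delta:=\epsilon/K(t_0)$ works.

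The main obstacle will be making the bound on the compact gap $[t_0,t_1]$ uniform in the solution, because the first zero $t_1$ depends on $\varphi$ and could in principle be arbitrarily far from $t_0$. My resolution is to exploit that $y$ keeps constant sign on $[t_0,t_1]$: once $s-\tau(s)\ge t_0$, the whole tail $x_s$ also has the required sign, so the (H4*) argument from Lemma \ref{lem2.2} forces $(e^{A(t)}y(t))'\le 0$ on the non-impulsive pieces, producing a bound on $|y(t_1)|$ in terms of the supremum of $|y|$ near $t_0$ that is independent of how far out $t_1$ may lie. This is the estimate that upgrades qualitative boundedness to quantitative stability.
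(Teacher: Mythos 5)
Your overall route is the one the paper intends: the paper offers no argument for Theorem \ref{thm2.1} beyond the remark that it follows from Lemmas \ref{lem2.2}, \ref{lem2.3} and \ref{lem2.4}, and your boundedness half is a correct filling-in of that remark. The observation that (H4) implies (H4*) is right; the passage from an oscillatory $x$ to the continuous $y$ of \eqref{2.8} is right; and the choice $\eta=\eta_0\max\{1,\alpha_2^{-1}\}$ does place $y$ inside the invariant strip of Lemma \ref{lem2.4}(i), after which \eqref{2.11} and (H2)(i) return boundedness of $x$. (Two small caveats: if $\alpha_2=0$ your $\eta$ is undefined and you must switch to part (ii) or argue directly; and you should make sure the product in \eqref{2.8} is taken over $t_0\le t_k<t$ for a solution starting at $t_0$, which is what makes $y=x$ on the initial interval.)

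The stability half has one genuine hole. On the gap $[t_0,t_1]$ you claim that the constant sign of $y$ together with (H4*) forces $(e^{A(t)}y(t))'\le 0$ between impulses; but for $t$ with $t-\tau(t)<t_0$ the segment $x_t$ still contains the initial function $\varphi$, which need not share the sign of $y$ on $[t_0,t_1]$, so (H4*) gives no sign information on $f(t,x_t)$ there. You acknowledge this (``once $s-\tau(s)\ge t_0$''), but your final estimate then controls $|y|$ on $[T_1,t_1]$, where $T_1:=\inf\{t\ge t_0: t-\tau(t)\ge t_0\}$, in terms of the supremum of $|y|$ on $[t_0,T_1]$ --- and that supremum is never bounded by a multiple of $\|\varphi\|$, so the chain does not close. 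The missing step is a quantitative estimate on the fixed compact interval $[t_0,T_1]$ (which depends only on $t_0$ and $\tau$, not on $\varphi$): for instance, while $x>0$ on $[t_0,s]$ the only negative values in the segment $x_s$ come from the initial data, so ${\mathcal M}_s(-x_s)\le\|\varphi\|$, and the second line of \eqref{2.14} gives $\bigl(e^{A(s)}y(s)\bigr)'\le e^{A(s)}\lambda_2(s)\Bigl(\prod_{k}b_k^{-1}\Bigr)\|\varphi\|$, which upon integration over $[t_0,T_1]$ yields $\sup_{[t_0,T_1]}|y|\le K_1(t_0)\|\varphi\|$ (a Gronwall argument works as well). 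With that inserted the argument is complete; note, finally, that the resulting $\delta=\varepsilon/K(t_0)$ establishes stability but not the \emph{uniform} stability asserted in the statement, since your constant $K(t_0)$ depends on $t_0$.
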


We are now in the position to prove the main result in this section.

\begin{theorem}\label{thm2.2} Assume (H1), either (H2) or (H3), (H4) and (H5).  Then the zero solution of \eqref{1.1)} is globally asymptotically stable.
\end{theorem}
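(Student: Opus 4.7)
By Theorem~\ref{thm2.1}, the hypotheses already yield (uniform) stability of $x\equiv 0$ and uniform boundedness of every solution of \eqref{1.1)}, so only global attractivity remains. Given a solution $x(t)$, the plan is to split according to its oscillatory behaviour: if $x$ is non-oscillatory, Lemma~\ref{lem2.3} applies directly and gives $x(t)\to 0$, so the real work is in the oscillatory case.

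Assume that $x$ is oscillatory, and let $y(t)$ be the continuous function associated with $x$ by Lemma~\ref{lem2.1}. Since $y$ is a positive multiple of $x$ on each interval between impulses, $y$ has the same signs as $x$ and possesses zeros tending to $\infty$; picking a zero $t_0\ge T$ of $y$ and $\eta>0$ large enough that $-\eta\le y\le\eta\alpha_2$ on the compact interval $[t_0-\tau(t_0),t_0]$, Lemma~\ref{lem2.4}(i) confines $y$ to $[-\eta,\eta\alpha_2]$ on $[t_0,\infty)$. Set
$$u_1:=\limsup_{t\to\infty}y(t)\ge 0,\qquad \ell_1:=-\liminf_{t\to\infty}y(t)\ge 0.$$
The goal is to prove
$$u_1\le\alpha_2\ell_1 \quad\text{and}\quad \ell_1\le\alpha_1 u_1.$$
Combining these yields $u_1\le\alpha_1\alpha_2\, u_1$, so the strict inequality $\alpha_1\alpha_2<1$ of (H5) forces $u_1=0$; likewise $\ell_1=0$. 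Then $y(t)\to 0$, and since $|x(t)|\le P_n|y(t)|$ with $(P_n)$ bounded by (H2)(i) or (H3)(i), also $x(t)\to 0$.

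To prove $u_1\le\alpha_2\ell_1$, fix $\varepsilon>0$ and choose $T'$ with $-\ell_1-\varepsilon<y(t)<u_1+\varepsilon$ for $t\ge T'$. Pick $\tau_n\to\infty$ with $y(\tau_n)\to u_1$ and $y(\tau_n)>0$, and let $\xi_n$ be the largest zero of $y$ in $[0,\tau_n)$; by oscillation of $y$, $\xi_n$ exists for $n$ large and $\xi_n\to\infty$, with $y>0$ on $(\xi_n,\tau_n]$. Repeating the chain of inequalities behind \eqref{2.14} yields, for every $s\ne t_k$,
$$(ye^A)'(s)\le e^{A(s)}\lambda_2(s)B(s){\mathcal M}_s(-y_s).$$
In the sub-case $\xi_n\ge\tau_n-\tau(\tau_n)$, bounding ${\mathcal M}_s(-y_s)\le\ell_1+\varepsilon$ on $[\xi_n,\tau_n]$ and integrating from $\xi_n$ to $\tau_n$ yields directly $y(\tau_n)\le(\ell_1+\varepsilon)\alpha_2$. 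In the sub-case $\xi_n<\tau_n-\tau(\tau_n)$, the (WLOG) monotonicity of $s\mapsto s-\tau(s)$ and continuity produce a unique $s^\dagger\in(\xi_n,\tau_n)$ with $s^\dagger-\tau(s^\dagger)=\xi_n$; on $[s^\dagger,\tau_n]$ the delay window $[s-\tau(s),s]$ sits inside $(\xi_n,\tau_n]$, where $y>0$, so ${\mathcal M}_s(-y_s)=0$ and $(ye^A)'\le 0$, while over $[\xi_n,s^\dagger]=[s^\dagger-\tau(s^\dagger),s^\dagger]$ the same pointwise bound applies. Integrating, using $y(\tau_n)e^{A(\tau_n)}\le y(s^\dagger)e^{A(s^\dagger)}$, and invoking the definition of $\alpha_2$ with base point $s^\dagger$ again produces $y(\tau_n)\le(\ell_1+\varepsilon)\alpha_2$. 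Letting $n\to\infty$ and then $\varepsilon\to 0$ gives $u_1\le\alpha_2\ell_1$; the symmetric estimate $\ell_1\le\alpha_1 u_1$ follows by the analogous argument based on the lower bound in (H4) along a sequence with $y\to-\ell_1$.

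The main obstacle is the second sub-case $\xi_n<\tau_n-\tau(\tau_n)$: the auxiliary point $s^\dagger$ and the accompanying split of the integration are precisely what prevents spurious contributions from the region before $\xi_n$ where $y$ may be negative, so that a single factor $\alpha_2$ (rather than a worse constant) survives and the strict contraction $\alpha_1\alpha_2<1$ from (H5) can close the argument.
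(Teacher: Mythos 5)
Your proposal is correct and follows essentially the same route as the paper: reduce to oscillatory solutions via Lemma~\ref{lem2.3}, pass to the continuous function $y$ of Lemma~\ref{lem2.1}, and derive $u\le\alpha_2 v$ and $v\le\alpha_1 u$ by integrating the Yorke-condition bound from a zero of $y$ up to a near-extremum, so that (H5) forces $u=v=0$. The only (harmless) difference is how the zero is located: the paper takes the $\tau_n$ to be local maxima and uses the sign argument of Lemma~\ref{lem2.4} to force a zero inside the delay window $[\tau_n-\tau(\tau_n),\tau_n)$, whereas you take the last zero before $\tau_n$ and handle the case where it falls outside the window with the auxiliary point $s^\dagger$ and the monotonicity of $e^{A}y$ there.
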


\begin{proof} By virtue of Lemma \ref{lem2.3} and \eqref{2.11}, we only need to show that zero attracts all  oscillatory solutions of \eqref{2.9}.
	Take an oscillatory solution $y(t)$ of \eqref{2.9}, and set
	\begin{equation}\label{2.15} 
	u:=\limsup_{t\to\infty}y(t),\q -v:=\liminf_{t\to\infty}y(t).
	\end{equation} 
	Clearly $0\leq u,v<\infty$, because $y(t)$ is bounded. 
	For $t\ge 0$, denote $d(t)=t-\tau(t)$ and $d^2(t)=d(d(t))$. 
	Fix $\varepsilon>0$ and consider $T_0>0$ as in (H5) with $ d^2(T_0)>0$ and such that
	\begin{equation}\label{2.16} 
	-v_\varepsilon:=-(v+\varepsilon)<y(t)<u+\varepsilon:=u_\varepsilon,\q{\rm for }\q t\geq d^2(T_0).
	\end{equation} 
	As $y(t)$ is continuous, there exists a sequence  $s_n\nearrow\infty$ with $s_n\geq T_0$ such that $y(s_n)>0$, $y(s_n)$ are local maxima, and $y(s_n)\to u$ as $n\to\infty$. We may assume that $y(s)<y(s_n)$ for $s_n-s>0$ small. As in the proof of Lemma \ref{lem2.4}, by the Yorke condition (H4) we deduce that for each $n\in\N$  there exists $\xi_n\in[s_n-\tau(s_n),s_n)$ such that $y(\xi_n)=0$ and $y(s)>0$ for $s\in(\xi_n,s_n]$. By \eqref{2.16}, we have $y(s)>-v_\varepsilon$ for $s\in[\xi_n-\tau(\xi_n),s_n]$. Arguing as in the proof of Case 1  of Lemma \ref{lem2.4}(i), we conclude that $y(s_n)\leq \alpha_2  v_\vare$. Letting $n\to\infty$ and $\vare\to0^+$, we obtain
	\begin{equation}\label{2.17} 
	u\leq \alpha_2 v.\end{equation} 
	Similar arguments lead to
	\begin{equation}\label{2.18} 
	v\leq \alpha_1u.\end{equation} 
	From \eqref{2.17}, \eqref{2.18}, we have
	\begin{equation}\label{2.19} 
	u\leq \alpha_1\alpha_2u,\q v\leq \alpha_1\alpha_2 v.\end{equation} 
	Under the constraint $\al_1\al_2<1$, \eqref{2.19} is possible only if  $u=0$ and $v=0$, thus  $y(t)\to 0$ as $t\to \infty$.
\end{proof}

%
%
%
%

For the situation without impulses we obtain the following criterion:

\begin{corollary}\label{cor2.1} For $a(t), \tau(t),f(t,\var)$ as in \eqref{1.1)}, consider the scalar DDE
	\begin{equation}\label{2.20} 
	x'(t)+a(t)x(t)=f(t, x_t),\q t\ge 0,\end{equation} 
	and assume  either (H2)(ii) or (H3)(ii), (H4) and $\al_1\al_2<1$, where
	\begin{equation}\label{2.21} 
	\al_i:=\al_i(T)=\sup_{t\geq T}
	\int_{t-\tau(t)}^t\lambda_i(s)e^{-\int_s^t a(u)du}ds,\q i=1,2,\end{equation} 
	for some $T>0$. 
	Then the zero solution of \eqref{2.20} is globally asymptotically stable.
\end{corollary}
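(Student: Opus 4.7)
The plan is to recognize Corollary \ref{cor2.1} as a direct specialization of Theorem \ref{thm2.2} obtained by viewing the non-impulsive DDE \eqref{2.20} as an instance of \eqref{1.1)} with trivial impulses $I_k\equiv 0$ (or, equivalently, with an empty sequence of impulse instants).

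First I would set $a_k=b_k=1$ for all $k\in\N$. Then (H1) is immediate, since $x[x+I_k(x)]=x^2$, so the inequalities in \eqref{2.1} become equalities. In particular, the sequence $P_n=\prod_{k=1}^n a_k$ is identically equal to $1$, which is both bounded and convergent; hence (H2)(i) and (H3)(i) hold trivially. Consequently, assuming either (H2)(ii) or (H3)(ii) in the statement of Corollary \ref{cor2.1} is the same as assuming, respectively, either (H2) or (H3) in full. Condition (H4) is assumed, so it remains only to check that the non-impulsive constants $\alpha_i$ in \eqref{2.21} coincide with the impulsive ones in \eqref{2.3}, ensuring (H5) holds with the same threshold $\alpha_1\alpha_2<1$.

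Next I would observe that with no impulse instants in the interval $[t+\theta,t)$, the product defining $B(t)$ in (H5) is empty and thus equals $1$ by the stated convention. Substituting $B(s)\equiv 1$ into \eqref{2.3} recovers exactly the formula \eqref{2.21} for $\alpha_i$. Therefore the hypothesis $\alpha_1\alpha_2<1$ in Corollary \ref{cor2.1} translates verbatim into (H5) for \eqref{1.1)} under the above identification.

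With all hypotheses of Theorem \ref{thm2.2} verified for \eqref{2.20}, the conclusion -- global asymptotic stability of the zero solution -- follows at once. There is no real technical obstacle here; the only thing to be careful about is to confirm that the definitions of $\alpha_i$ and $B(t)$ collapse correctly in the no-impulse limit, which they do by the empty-product convention adopted right after (H5).
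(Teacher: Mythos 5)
Your proposal is correct and matches the paper's intent exactly: the paper states Corollary \ref{cor2.1} as an immediate specialization of Theorem \ref{thm2.2} to the impulse-free case, which is precisely the reduction you carry out (taking $a_k=b_k=1$ so that (H1), (H2)(i), (H3)(i) hold trivially and $B(s)\equiv 1$ collapses \eqref{2.3} to \eqref{2.21}). The only point worth a passing remark is that (H5) formally also requires $T-\tau(T)\ge 0$, but since $\al_i(T)$ is non-increasing in $T$ this is harmlessly arranged by enlarging $T$.
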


Even for the situation without impulses \eqref{2.20}, when $a(t)\not\equiv0$ but $\int_0^\infty a(t)\, dt<\infty$, the additional requirement (H3)(ii) must be imposed together with   (H4),  otherwise zero need not attract the eventually monotone solutions, as shown by the next counter-example.

\begin{exmp} \rm{Consider the scalar DDE without impulses
		\begin{equation}\label{2.22} 
		x'(t)+{1\over {(t+1)^2}} x(t)=g(x(t-\tau)),\q t\ge 0,\end{equation} 
		where $\tau >0$ and $g:\R\to\R$ is defined by $g(x)=-x$ for $x\le 0$, $g(x)=0$ for $x>0$. It is apparent that  
		\eqref{2.2} is satisfied with $\la_1(t)\equiv 0,\la_2(t)\equiv 1$. 
		For $\al_i=\al_i(T), i=1,2,$ defined in \eqref{2.21}, we get $\al_1=0$ and  $\al_2<  \tau<\infty$.
		Therefore, (H5) holds and  zero is an attractor of  all oscillatory solutions. Note however that both (H2)(ii) and (H3)(ii) fail, due to 
		$$\int_0^\infty {1\over {(t+1)^2}}\, dt=1<\infty\q {\rm and}\q 
		\int_{0}^\infty |g(x(t-\tau))|\, dt <\infty$$
		if $\lim_{t\to\infty} x(t)=c>0$. On the other hand, observe that   zero is not globally asymptotically stable for \eqref{2.22}, since
		all  functions $x(t)=c\exp ({1\over{t+1}})$ with $c>0$ are solutions of \eqref{2.22}.}
\end{exmp}

Theorem \ref{thm2.2} can be slightly improved for systems \eqref{1.1)} with $f(t,x_t)$ of the form
\begin{equation}\label{2.23} 
f(t,x_t)=\sum_{i=1}^n f_i(t,x_t^i),\end{equation} 
where $f_i(t,x_t^i)=f_i(t,x_{|_{[t-\tau_i(t),t]}})$ with $\tau_i(t)$ satisfying the above conditions for $\tau(t)$, as follows.  Let  $\tau(t)=\max_{1\le i\le n} \tau_i(t)$.  
Suppose now that  each $f_i,\, i=1,\dots,n$, satisfies the  Yorke condition (H4),
i.e., there exist piecewise continuous functions
$\lambda_{1,i},\lambda_{2,i}:[0,\infty)\to[0,\infty)$
such that
\begin{equation}\label{2.24} 
-\lambda_{1,i}(t){\mathcal M}^i_t(\varphi)\leq f_i(t,\varphi_{|_{[-\tau_i(t),0]}})\leq\lambda_{2,i}(t){\mathcal M}^i_t(-\varphi),\q t\geq0,1\le i
\le n,\q \varphi\in PC^i(t),\end{equation} 
for $ PC^i(t)=PC([-\tau_i(t),0];\R)$ and ${\mathcal M}^i_t(\varphi)=\dps \max\{ 0,\sup _{\th\in [-\tau_i(t),0]}\var (\th)\}$. A careful reading of the proof of Lemma \ref{lem2.4} (see formula \eqref{2.14})  shows the validity of the statement below.

\begin{theorem}\label{thm2.3} For \eqref{1.1)} with $f(t,x_t)$ of the form \eqref{2.23}, assume (H1) and  either (H2) or (H3). Suppose also that the Yorke conditions \eqref{2.24} hold and that there is $T>0$ such that
	$\al_1\al_2<1$, where $  \alpha_j:=  \alpha_j(T)$ are given by
	\begin{equation}\label{2.25} 
	\alpha_j(T):=\sup_{t\geq T} 
	\int_{t-\tau(t)}^t \sum_{i=1}^n\lambda_{j,i}(s)e^{-\int_s^ta(u)du}B_i(s)\, ds,\q j=1,2,\end{equation} 
	and    $B_i(t):=\dps\max_{\theta\in[-\tau_i(t),0]}\bigg(\prod_{k:t+\theta\leq t_k<t}b_k^{-1}\bigg),\, i=1,\dots,n$.
	Then the zero solution of \eqref{1.1)} is globally asymptotically stable.
\end{theorem}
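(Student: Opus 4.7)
The plan is to adapt, step by step, the proof of Theorem \ref{thm2.2} to the sum structure $f(t,x_t)=\sum_{i=1}^n f_i(t,x_t^i)$, replacing a single Yorke bound by a sum of componentwise bounds.

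First I would dispose of the non-oscillatory case via Lemma \ref{lem2.3}. For each $i$, the componentwise Yorke condition \eqref{2.24} immediately implies $f_i(t,\varphi)\le 0$ when $\varphi\ge 0$ and $f_i(t,\varphi)\ge 0$ when $\varphi\le 0$; summing over $i$ shows that $f$ satisfies (H4*). Therefore, under (H1) and either (H2) or (H3), Lemma \ref{lem2.3} forces every non-oscillatory solution of \eqref{1.1)} to tend to zero.

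The main work is a componentwise version of Lemma \ref{lem2.4}. Let $y(t)$ be a solution of \eqref{2.9} with $y(t_0)=0$ for some $t_0\ge T$, and suppose, say, we are in Case 1: $y(T_0)>\alpha_2\eta$ at some $T_0>t_0$, with $-\eta\le y<y(T_0)$ on $[t_0,T_0)$. As in Lemma \ref{lem2.4}, one locates $\xi_0\in[T_0-\tau(T_0),T_0)$ with $y(\xi_0)=0$ and $y>0$ on $(\xi_0,T_0)$. On this interval, using $f=\sum_i f_i$ together with \eqref{2.24} applied to each $f_i$ separately, I would derive
\begin{align*}
(e^{A(s)}y(s))' &= e^{A(s)}\prod_{k:0\le t_k<s}J_k(x(t_k))\sum_{i=1}^n f_i(s,x_s^i)\\
&\le e^{A(s)}\sum_{i=1}^n\lambda_{2,i}(s)\prod_{k:0\le t_k<s}J_k(x(t_k))\,\mathcal{M}^i_s(-x_s^i).
\end{align*}
Expressing $\mathcal{M}^i_s(-x_s^i)$ in terms of $y$ via $x(t)=y(t)\prod_{k:0\le t_k<t}J_k^{-1}(x(t_k))$ and collapsing the two products, the outer $\prod_{k:0\le t_k<s}J_k$ cancels part of the inner $\prod_{k:0\le t_k<s+\theta}J_k^{-1}$, leaving exactly the telescoping factor $\prod_{k:s+\theta\le t_k<s}J_k(x(t_k))$, which by \eqref{2.10} is bounded above by $\prod_{k:s+\theta\le t_k<s}b_k^{-1}$. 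Taking the supremum over $\theta\in[-\tau_i(s),0]$ yields the bound $B_i(s)\mathcal{M}^i_s(-y_s^i)$. Since $\mathcal{M}^i_s(-y_s^i)\le \eta$ on $[\xi_0,T_0]$ (from $-\eta\le y$ and $\tau_i\le\tau$), integrating over $[\xi_0,T_0]$ and invoking the definition \eqref{2.25} gives $y(T_0)\le\alpha_2\eta$, contradicting the choice of $T_0$. Case 2 is symmetric, with $\alpha_1$ and $\lambda_{1,i}$ playing the analogous roles, and the inequality $\alpha_1\alpha_2\le 1$ used exactly as in Lemma \ref{lem2.4}.

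The conclusion then follows as for Theorem \ref{thm2.2}: taking $u=\limsup_{t\to\infty}y(t)$ and $-v=\liminf_{t\to\infty}y(t)$ over an oscillatory solution $y$, one constructs sequences of local maxima $y(s_n)\to u$ and of zeros $\xi_n$ just before each $s_n$, applies the new componentwise estimate to obtain $u\le\alpha_2 v$, and symmetrically $v\le\alpha_1 u$; the strict inequality $\alpha_1\alpha_2<1$ then forces $u=v=0$. I expect the only delicate step to be the bookkeeping in the componentwise estimate above, particularly the absorption of $\prod J_k(x(t_k))$ into $B_i(s)$ with the correct delay $\tau_i$ (rather than $\tau=\max_i\tau_i$); this sharpening is precisely what makes \eqref{2.25} strictly better than what a naive reduction to Theorem \ref{thm2.2} would produce.
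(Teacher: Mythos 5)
Your proposal is correct and follows exactly the route the paper intends: the paper proves Theorem \ref{thm2.3} only by remarking that ``a careful reading of the proof of Lemma \ref{lem2.4} (see formula \eqref{2.14})'' yields the result, and your write-up is precisely that careful reading --- handling non-oscillatory solutions via Lemma \ref{lem2.3} after checking (H4*) termwise, redoing estimate \eqref{2.14} componentwise so that each product of $J_k$'s collapses to a factor bounded by $B_i(s)$ with the delay $\tau_i$, and concluding as in Theorem \ref{thm2.2}. No gaps.
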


The next criterion is an important particular case of Theorem \ref{thm2.3}.

\begin{corollary}\label{cor2.2} Consider \eqref{1.1)} with $f(t,x_t)$ given by \eqref{2.23}.
	Assume (H1), either (H2) or (H3), and that
	conditions \eqref{2.24} hold with 
	$$\sum_{i=1}^n\la_{j,i}(t)B_i(t)\le c_ja(t),\q j=1,2,$$ where $c_1,c_2$ are positive constants.
	If either 
	\begin{equation}\label{2.26} A:=\limsup_{t\ge 0} \int_{t-\tau(t)}^t a(u)\, du<\infty\end{equation}
	with
	$\sqrt{c_1c_2}(1-e^{-A})<1,$
	or $A=\infty$ with $c_1c_2<1$,
	then the zero solution of \eqref{1.1)} is globally asymptotically stable. In particular this is the case if  conditions \eqref{2.24} and \eqref{2.26}  are satisfied with
	$$\sum_{i=1}^n\la_{j,i}(t)B_i(t)\le a(t),\q j=1,2.$$
\end{corollary}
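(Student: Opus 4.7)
The plan is to reduce Corollary \ref{cor2.2} directly to Theorem \ref{thm2.3}. Since (H1), either (H2) or (H3), and the Yorke conditions \eqref{2.24} are all among the hypotheses, the only missing ingredient is the inequality $\alpha_1\alpha_2<1$ for some $T>0$, where $\alpha_j$ are defined in \eqref{2.25}. The entire proof is therefore a short calculation relating the $\alpha_j$ to the constants $c_j$ and to the quantity $A$ in \eqref{2.26}.

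First I would use the hypothesis $\sum_{i=1}^n\lambda_{j,i}(s)B_i(s)\le c_j a(s)$ to bound
\begin{equation*}
\alpha_j(T)\le c_j\sup_{t\ge T}\int_{t-\tau(t)}^t a(s)e^{-\int_s^t a(u)\,du}\,ds.
\end{equation*}
The substantive observation is that the integrand is a total derivative, $a(s)e^{-\int_s^t a(u)\,du}=\frac{d}{ds}e^{-\int_s^t a(u)\,du}$, so the inner integral equals $1-\exp\bigl(-\int_{t-\tau(t)}^t a(u)\,du\bigr)$. This single identity carries the whole argument.

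Next I would split into the two cases from the statement. If $A<\infty$, the monotonicity of $x\mapsto 1-e^{-x}$ combined with the definition of $\limsup$ yields, for $T$ large enough, $\alpha_j\le c_j(1-e^{-A})$, and therefore $\alpha_1\alpha_2\le\bigl[\sqrt{c_1c_2}(1-e^{-A})\bigr]^2<1$ by hypothesis. If $A=\infty$, the trivial bound $1-e^{-x}\le 1$ gives $\alpha_j\le c_j$ and hence $\alpha_1\alpha_2\le c_1c_2<1$. In both cases Theorem \ref{thm2.3} supplies global asymptotic stability. The ``in particular'' clause then follows by setting $c_1=c_2=1$: \eqref{2.26} forces $A<\infty$, and then $\sqrt{c_1c_2}(1-e^{-A})=1-e^{-A}<1$ automatically.

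No step in this outline looks genuinely hard; the only mild subtlety is a consistency check when $A=\infty$. In that regime one must make sure that the hypothesis ``(H2) or (H3)'' of Theorem \ref{thm2.3} is still available, and this is automatic: if $\int_0^\infty a(u)\,du<\infty$ then $\int_{t-\tau(t)}^t a(u)\,du$ is uniformly bounded, contradicting $A=\infty$. Hence $A=\infty$ implies (H2)(ii) directly, which together with the standing assumption (H2)(i) (or (H3)) closes the argument.
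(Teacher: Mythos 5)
Your proposal is correct and follows essentially the same route as the paper: bound $\alpha_j$ by $c_j\sup_{t\ge T}\bigl(1-e^{-\int_{t-\tau(t)}^t a(u)\,du}\bigr)$ via the total-derivative identity, then split into the cases $A<\infty$ and $A=\infty$ and invoke Theorem \ref{thm2.3}. The only cosmetic difference is that the paper keeps an $\varepsilon$: from the $\limsup$ one only gets $\alpha_j\le c_j\bigl(1-e^{-(A+\varepsilon)}\bigr)$ for $T$ large, which still yields $\alpha_1\alpha_2<1$ for $\varepsilon$ small since the hypothesis $\sqrt{c_1c_2}\,(1-e^{-A})<1$ is strict.
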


\begin{proof} Suppose $A<\infty$.  In this scenario,  we may take $\al_j$ in \eqref{2.25}
	given by
	\begin{equation}\label{2.27} 
	\al_j=\al_j (T)=
	c_j\sup_{t\geq T}
	\int_{t-\tau(t)}^t a(s)e^{-\int_s^ta(u)du}\, ds=c_j\sup_{t\geq T}\left (1-e^{-\int_{t-\tau(t)}^t a(u)\, du}\right),
	\end{equation}    
	therefore, for any $\vare>0$  there is $T>0$ such that   $\al_j\le c_j(1-e^{-(A+\vare)}), \, j=1,2$. If  $A=\infty$, instead of \eqref{2.27} we obtain $\al_j=c_j,j=1,2.$\end{proof}

As a particular case,  a criterion given by Tang  \cite{9.} and stated below is obtained by considering the DDE without impulses \eqref{2.20}
and taking $c_1=c_2=1$ in  Corollary \ref{cor2.2}.

\begin{corollary}\label{cor2.3} For $a(t), \tau(t),f(t,\var)$ as in \eqref{1.1)}, 
	assume  (H2)(ii), \eqref{2.26} and
	$$
	-a(t){\mathcal M}_t(\varphi)\leq f(t,\varphi)\leq a(t){\mathcal M}_t(-\varphi),\q t\geq0,\q \varphi\in C(t),
	$$
	where ${\mathcal M}_t(\varphi)$ is as in \eqref{2.2}.
	Then the zero solution of \eqref{2.20} is globally asymptotically stable.
\end{corollary}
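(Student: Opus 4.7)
The plan is to derive Corollary \ref{cor2.3} as a direct specialization of Corollary \ref{cor2.2} to the setting without impulses. I would begin by interpreting \eqref{2.20} as a degenerate form of \eqref{1.1)} with an empty sequence of impulse times (equivalently, $I_k\equiv 0$, so that $a_k=b_k=1$ for every $k$). With these choices, (H1) is automatic and the product $P_n=\prod_{k=1}^n a_k=1$ is bounded, so (H2)(i) holds; (H2)(ii) is part of the hypothesis. The quantity $B_i(t)$ appearing in Corollary \ref{cor2.2} is a product over an empty index set, hence equals $1$ identically.

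Next, I would observe that Corollary \ref{cor2.2} is stated for decompositions $f(t,x_t)=\sum_{i=1}^n f_i(t,x_t^i)$, and that the present setting corresponds to the trivial choice $n=1$, $f_1=f$, $\tau_1=\tau$. The two-sided Yorke inequality imposed on $f$ is then precisely \eqref{2.24} with $\lambda_{1,1}(t)=\lambda_{2,1}(t)=a(t)$. Because the DDE \eqref{2.20} has no impulses, its solutions are continuous and thus the Yorke condition only needs to be assumed on $C(t)$, which is how Corollary \ref{cor2.3} phrases it. With these identifications, $\sum_{i=1}^n \lambda_{j,i}(t)B_i(t)=a(t)$, so we can take $c_1=c_2=1$ in the hypothesis of Corollary \ref{cor2.2}.

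Finally, I would invoke \eqref{2.26}, that is $A:=\sup_{t\geq 0}\int_{t-\tau(t)}^t a(u)\,du<\infty$, and note that the required inequality $\sqrt{c_1c_2}\,(1-e^{-A})<1$ reduces to $1-e^{-A}<1$, which is automatic for any finite $A$. All hypotheses of Corollary \ref{cor2.2} are then met, and its conclusion yields the global asymptotic stability of the zero solution of \eqref{2.20}.

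Since the reduction is purely formal, there is no real obstacle beyond bookkeeping. The one small point worth verifying carefully is that assuming the Yorke condition only on $C(t)$ is enough: this is because every solution of \eqref{2.20} is continuous, so its segment $x_t$ lies in $C(t)$, and the proof path through Theorem \ref{thm2.3} and Corollary \ref{cor2.2} only ever evaluates $f$ on continuous arguments. Thus no extension of the Yorke inequality to piecewise continuous functions is needed.
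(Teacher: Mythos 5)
Your proposal is correct and follows exactly the paper's route: the authors obtain Corollary \ref{cor2.3} by viewing \eqref{2.20} as the impulse-free case of \eqref{1.1)} and taking $c_1=c_2=1$ in Corollary \ref{cor2.2}, so that \eqref{2.26} makes $\sqrt{c_1c_2}\,(1-e^{-A})=1-e^{-A}<1$ automatic. Your additional check that the Yorke condition need only be assumed on $C(t)$ is a sound (and worthwhile) bookkeeping point consistent with the paper's statement.
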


\section{A periodic Lasota-Wazewska model with impulses}\label{seccao-modelo-LasotaWazewska}
\setcounter{equation}{0}

In this section, we study a periodic Lasota-Wazewska model with impulses (see  e.g. \cite{5.,  6., 12.}):
\begin{equation}\label{3.1}
\begin{split}
&N'(t)+a(t)N(t)=\sum_{i=1}^n b_i(t)e^{-\be_i (t)N(t-\tau_i(t))},\q 0\le t\ne t_k, \\
&\Delta N(t_k):=N(t_k^+)-N(t_k) =I_k(N(t_k)),\q k=1,2,\dots,
\end{split}
\end{equation}
where $0<t_1<t_2<\dots <t_k<\dots, t_k\to \infty$, and

\begin{itemize}
	
	\item[($f_0$)]
	the functions $a(t),b_i(t),\be_i(t),\tau_i(t)$ are continuous, positive and $\omega$-periodic, $1\le i\le n, t\in\R$, for some constant $\omega >0$;

	\item[($i_0$)]   the functions $I_k:[0,\infty)\to \R$ are  continuous with  $u+I_k(u)>0$ for $u>0$, $ k\in\N$; moreover, there is a positive integer $p$ such that 
	$$
	t_{k+p}=t_k+\omega,\q I_{k+p}(u)=I_k(u),\q  k\in\N, u>0.
	$$
\end{itemize}
Special attention will be given to the particular case of \eqref{3.1} with $\omega$-periodic constant delays $\tau_i(t)=m_i\omega$ for $m_i$ positive integers, $ i=1,\dots,n$. For some related models, see also \cite{HuoLiLui,SakerAl1,SakerAl2}.

Without loss of generality, we may suppose that  there are exactly $p$ instants of impulses on the interval $[0,\omega]$, $t_1,t_2,\dots, t_p$. With minimal changes, we can also consider a more general framework, with $a(t),b_i(t),\be_i(t),\tau_i(t)$  piecewise continuous functions.  Due to the biological applications, we only consider positive solutions  of \eqref{3.1}, corresponding to initial conditions $N_0=\phi\in C^+_0$, where $C^+_0$ is the set of continuous functions $ \phi:[-\bar\tau,0]\to [0,\infty)$ with $\phi(0)>0$, for $\dps \bar \tau=\max_{1\le i\le n}\max_{t\ge 0} \tau_i(t)$. It is easy to see that if $\phi\in C^+_0$, then $N(t,0,\phi)$ is defined and positive for $t\ge 0$.

Some criteria for the existence of an $\omega$-periodic solution to \eqref{3.1} have been established. Namely, the following result is a consequence of Theorem 2.3  in Li et al. \cite{5.}:

\begin{lemma}\label{lem3.1} \cite{5.} For \eqref{3.1}, assume ($f_0$), ($i_0$), and that $I_k(u)\ge 0$ for $u\ge 0, k\in\N$. If
	$$CI^\infty<1,$$
	where $\dps I^\infty=\limsup_{u\to \infty} \sum_{k=1}^p {{I_k(u)}\over u}$ and $\dps C={{e^{\int_0^\omega a(u)\, du}}\over {e^{\int_0^\omega a(u)\, du}-1}},$ then \eqref{3.1} has at least one $\omega$-periodic positive solution $N^*(t)$.
\end{lemma}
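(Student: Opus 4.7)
The plan is to recognise that \eqref{3.1} fits the abstract setting of Theorem 2.3 in \cite{5.}, whose conclusion rests on a continuation / topological-degree argument, and to verify that the sole quantitative requirement---a uniform a priori bound on candidate positive $\omega$-periodic solutions---is precisely what the hypothesis $CI^\infty<1$ delivers.

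First, I would reformulate the existence problem as a fixed-point equation. For the $\omega$-periodic linear impulsive problem $N'+a(t)N=h(t)$, $\Delta N(t_k)=J_k$, the variation-of-constants formula combined with periodicity yields the Green's function
\begin{equation*}
G(t,s)=\frac{e^{\int_t^s a(u)\,du}}{e^{\int_0^\omega a(u)\,du}-1},\q t\le s\le t+\omega,
\end{equation*}
whose maximum over the period equals the constant $C$ appearing in the statement. An $\omega$-periodic function $N$ is then a solution of \eqref{3.1} if and only if it is a fixed point of the operator
\begin{equation*}
(TN)(t)=\int_t^{t+\omega}\!\! G(t,s)\sum_{i=1}^n b_i(s)e^{-\be_i(s)N(s-\tau_i(s))}\,ds+\!\!\sum_{t<t_k\le t+\omega}\!\!G(t,t_k)I_k(N(t_k))
\end{equation*}
acting on the Banach space of $\omega$-periodic left-continuous functions with jumps at $t_1,\dots,t_p$, endowed with the sup norm. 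Positivity of $G$, of the exponentials, and of each $I_k$ on $[0,\infty)$ (from the standing assumption) ensures that $T$ maps the cone of nonnegative functions into the strictly positive ones.

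Second, and this is the heart of the matter, I would produce an a priori bound on fixed points of the homotopy $N=\mu TN$, $\mu\in[0,1]$. Since $0<e^{-\be_i(s)N(s-\tau_i(s))}\le 1$, any such $N$ satisfies
\begin{equation*}
\|N\|_\infty \le C\sum_{i=1}^n \int_0^\omega b_i(s)\,ds + C\sum_{k=1}^p I_k(N(t_k)).
\end{equation*}
If the set of such $N$ were unbounded, dividing by $\|N\|_\infty\to\infty$ and passing to the $\limsup$ would yield $1\le CI^\infty$, contradicting the hypothesis. Compactness of $T$ on bounded subsets follows from a standard Arzel\`a--Ascoli argument applied on each open interval $(t_k,t_{k+1})$, after which the Leray--Schauder alternative---the essence of Theorem 2.3 in \cite{5.}---produces a positive fixed point $N^*$, which is the desired $\omega$-periodic solution.

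The main obstacle I expect is bookkeeping rather than substance: the correct Green's-function representation in the impulsive setting, together with the precise matching of function space, cone, and homotopy to the ones employed in \cite{5.}, so that the a priori bound above feeds directly into their Theorem 2.3. These details are routine but delicate in the presence of the impulsive jumps.
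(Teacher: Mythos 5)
The paper offers no proof of this lemma at all: it is imported verbatim as a consequence of Theorem~2.3 of Li et al.\ \cite{5.}, so there is no internal argument to measure yours against. Your reconstruction --- the periodic Green's function $G(t,s)=e^{\int_t^s a}/(e^{\int_0^\omega a}-1)$ with $\max G=C$, the fixed-point reformulation with the impulses entering as $\sum_k G(t,t_k)I_k(N(t_k))$, compactness via Arzel\`a--Ascoli on each $(t_k,t_{k+1})$, and an a priori bound feeding a topological fixed-point theorem --- is the standard route in this literature; the main cosmetic difference is that \cite{5.} works with Krasnosel'skii-type cone expansion/compression rather than the Leray--Schauder alternative, but the quantitative input is the same estimate you derive.

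There is, however, one step that does not close as written. From $\|N\|_\infty\le C\sum_i\int_0^\omega b_i(s)\,ds+C\sum_{k=1}^p I_k(N(t_k))$ you divide by $\|N\|_\infty$ and ``pass to the limsup'' to conclude $1\le CI^\infty$. But $N(t_1),\dots,N(t_p)$ are $p$ \emph{different} arguments, each merely bounded above by $\|N\|_\infty$, whereas $I^\infty$ is the limsup of the sum evaluated at a \emph{common} argument $u$. Using only $I_k\ge0$ one gets $\sum_k I_k(N(t_k))/\|N\|_\infty\le\sum_k I_k(N(t_k))/N(t_k)$ (for the indices with $N(t_k)\to\infty$; the others contribute nothing in the limit), and the limsup of the right-hand side is $\sum_k\limsup_{u\to\infty}I_k(u)/u$, which can strictly exceed $I^\infty=\limsup_{u\to\infty}\sum_k I_k(u)/u$ when the individual limsups are attained along different sequences. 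Since the $I_k$ are only assumed continuous and nonnegative here (hypothesis $(i_1)$ is \emph{not} assumed in this lemma), the contradiction you aim for does not follow. The gap can be repaired by assuming each $I_k$ nondecreasing (so that $I_k(N(t_k))\le I_k(\|N\|_\infty)$), by assuming the limits $\lim_{u\to\infty}I_k(u)/u$ exist, or by running the argument with $\sum_k\limsup_{u\to\infty}I_k(u)/u$ in place of $I^\infty$; in any case one should check precisely which functional of the impulses Theorem~2.3 of \cite{5.} uses before claiming the stated constant.
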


Let us however mention that  the assumption $I_k(u)\ge 0$ for all $u\ge 0,k\in\N$ in  \cite{5.} is quite strong, since it requires that the impulses are always positive. In view of the biological meaning of the model, the natural constraint is only that $I_k(u)+u>0$ for $u>0$. On the other hand, as we shall see, Liu and Takeuchi  \cite{6.}  considered the particular case of \eqref{3.1} with $\omega$-periodic constant delays $\tau_i(t)=m_i\omega$ for $m_i$ positive integers, $ i=1,\dots,n,$ and linear  impulses  $I_k(u)=b_ku$ with constants $b_k>-1$ (see  system \eqref{3.16} addressed later in this section), for which  the existence of a positive $\omega$-periodic solution was proven under a very general condition. 

For system \eqref{3.1} with   ($f_0$),  ($i_0$) fulfilled, we now 
impose the following additional hypotheses:

\begin{itemize}
	
	\item[($i_1$)]there exist constants $a_1,\dots,a_p$ and $b_1,\dots,b_p$, with $b_k>-1$, and such that
	$$b_k\le {{I_k(x)-I_k(y)}\over{x-y}}\le a_k,\q x,y\ge 0, x\ne y, k=1,\dots,p;$$
	\item[($i_2$)] $\dps \prod_{k=1}^p (1+a_k)\le 1.$
\end{itemize}

Assume now that there exists a positive $\omega$-periodic  solution $N^*(t)$, and
effect the change of variables $\dps x(t)=N(t)-N^*(t)$. Eq. \eqref{3.1} is transformed into
\begin{equation}\label{3.2}
\begin{split}
&x'(t)+a(t)x(t)=f(t,x_t),\q 0\le t\ne t_k, \\
&\Delta x(t_k) =\tilde I_k(x(t_k)),\q  k\in\N,
\end{split}
\end{equation}
where 
\begin{equation}\label{3.3}
\begin{split}
&f(t,\var)=\sum_{i=1}^n b_i(t)e^{-c_i(t)}\Big [e^{-\be_i (t)\var(-\tau_i(t))}-1\Big],\\
&c_i(t)=\be_i(t)N^*(t-\tau_i(t)),\q i=1,\dots,n,\\
&\tilde I_k(u)=I_k\big (N^*(t_k)+u\big )-I_k\big (N^*(t_k)\big ), \q k=1,\dots,p.
\end{split}
\end{equation}

Now, we insert this transformed system into the  framework of the previous sections: \eqref{3.2}
has the form \eqref{1.1)}, where the function $f(t,x_t)$ may have jump discontinuities  at the  points $t$ such that  $t-\tau_i(t)=t_k$ for some  $1\le i\le n,\, k\in\N$. Recall that $x(t)+N^*(t)>0$ for $t\ge 0$, for any solution $x(t)$ of \eqref{3.2}. Naturally, 
$S=\{\var \in PC([-\bar\tau,0];\R): \var (\th)\ge -N^*(\th)\, {\rm for}\, -\bar\tau\le \th <0, \var (0)>-N^*(0)\}$
is taken as the set of admissible initial conditions for \eqref{3.2}, and the spaces $PC^i(t)$ in \eqref{2.24} are replaced by
$S^i(t)=\{\var \in PC^i(t): \var (\th)\ge -N^*(t-\th)\ {\rm for} \ -\tau_i(t)\le \th \le 0\}$.

\begin{lemma}\label{lem3.2}
	Under ($f_0$), ($i_0$)--($i_2$), system \eqref{3.2} satisfies (H1) and (H2).\end{lemma}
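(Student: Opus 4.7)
The strategy is to translate the conditions ($i_1$) and ($i_2$) on the original impulses $I_k$ into (H1)-(H2) for the shifted impulses $\tilde I_k$ and the unchanged damping term $a(t)$.

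First I would verify (H1). The idea is to apply the Lipschitz-type bound in ($i_1$) to the pair $(x,y)=(N^*(t_k)+u,\, N^*(t_k))$, which lies in $[0,\infty)^2$ whenever $u\ge -N^*(t_k)$ (the constraint that defines the admissible set $S$). By the definition of $\tilde I_k$ in \eqref{3.3}, this gives
\[
b_k\,u \le \tilde I_k(u)\le a_k\,u \q\text{if } u>0,\qquad a_k\,u \le \tilde I_k(u)\le b_k\,u \q\text{if } u<0.
\]
Adding $u$ and then multiplying by $u$ flips the second set of inequalities back, so in both cases
\[
(1+b_k)u^2 \le u\bigl[u+\tilde I_k(u)\bigr]\le (1+a_k)u^2.
\]
Thus (H1) holds with $\tilde b_k:=1+b_k$ and $\tilde a_k:=1+a_k$; the positivity of $\tilde b_k$ follows from $b_k>-1$ in ($i_1$), and $\tilde a_k\ge \tilde b_k>0$.

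Next I would check (H2)(i). Because of the periodicity assumption $I_{k+p}=I_k$ in ($i_0$), the constants $a_k$ in ($i_1$) extend $p$-periodically, i.e.\ $\tilde a_{k+p}=\tilde a_k$ for all $k$. Writing any $n\in\N$ as $n=qp+r$ with $0\le r<p$,
\[
P_n=\prod_{k=1}^n (1+a_k)=\left(\prod_{k=1}^p (1+a_k)\right)^{\!q}\prod_{k=1}^r (1+a_k)\le \prod_{k=1}^r (1+a_k),
\]
by ($i_2$). Since $r$ takes finitely many values, $(P_n)$ is bounded.

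Finally, (H2)(ii) is immediate from ($f_0$): $a(t)$ is continuous, strictly positive and $\omega$-periodic, so $\int_0^\omega a(u)\,du>0$ and
\[
\int_0^\infty a(u)\,du=\sum_{n=0}^\infty \int_{n\omega}^{(n+1)\omega}a(u)\,du=+\infty.
\]
The only delicate point is the sign analysis in the derivation of (H1), where one must track carefully that the two sign reversals (in the bound for $\tilde I_k(u)$ and in the multiplication by $u$) compensate to yield a single pair of constants valid on all of $\R$; everything else is essentially bookkeeping against the periodicity assumptions.
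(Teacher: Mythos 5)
Your proof is correct and follows essentially the same route as the paper: reading ($i_1$) at the pair $(N^*(t_k)+u,\,N^*(t_k))$ to get $\tilde b_k=1+b_k\le \bigl(u+\tilde I_k(u)\bigr)/u\le 1+a_k=\tilde a_k$ on the admissible range $u>-N^*(t_k)$, bounding $P_n$ over one period via ($i_2$) and $p$-periodicity of the impulses, and deducing (H2)(ii) from positivity and periodicity of $a(t)$. The paper states these steps more tersely (it works directly with the quotient $\bigl(u+\tilde I_k(u)\bigr)/u$, which sidesteps your case split on the sign of $u$), but the content is identical.
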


\begin{proof} Let $f$ and $\tilde I_k$ be defined by \eqref{3.3}.
	It is  apparent that (H1) and (H2)(i) hold for \eqref{3.2}, since
	$$
	\tilde b_k\le {{u+\tilde I_k(u)}\over u}\le \tilde a_k,\q k\in\N, u\ne 0,u>-N^*(t_k),
	$$
	with
	$$\tilde b_k=b_k+1>0,\q  \tilde a_k= a_k+1,
	$$
	and the positive sequence $(P_k)_{k\in\N}$ defined by $\dps P_k=\prod_{i=1}^k \tilde a_i$ is  bounded, with
	$$\dps P_k\le \max \{ \prod_{i=1}^j \tilde a_i: j=1,2,\dots,p\}.$$
	Since $a(t)$ is positive and $\omega$-periodic, we have
	$\int_0^\infty a(t)\, dt=\infty$.
\end{proof}

We now choose adequate $\la_1,\la_2$ for the Yorke condition to be satisfied.

\begin{lemma}\label{lem3.3}
	Assume ($f_0$), ($i_0$)--($i_2$). If $N^*(t)$ is a positive $\omega$-periodic solution of system \eqref{3.1}, then \eqref{3.2} satisfies   \eqref{2.24}  with $\ \la_{1,i}, \la_{2,i}:[0,\infty)\to [0,\infty)$ given by
	\begin{equation}\label{3.4}
	\la_{1,i}(t)=\be_i(t)b_i(t)e^{-\be_i(t)N^*(t-\tau_i(t))},\q
	\la_{2,i}(t)=\be_i(t)b_i(t),\q t\ge 0,1\le i\le n.\end{equation}
\end{lemma}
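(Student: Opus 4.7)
The plan is to verify the pair of Yorke inequalities \eqref{2.24} for each summand
$$
f_i(t,\var)=b_i(t)e^{-c_i(t)}\Big[e^{-\be_i(t)\var(-\tau_i(t))}-1\Big],\q 1\le i\le n,
$$
separately (so that $f=\sum_i f_i$ fits the form \eqref{2.23}), working on the restricted phase space $S^i(t)\subset PC^i(t)$. Since $f_i(t,\var)$ depends on $\var$ only through $u:=\var(-\tau_i(t))$, I would carry out a case analysis on the sign of $u$, invoking in each case a single elementary inequality about the map $y\mapsto e^{-y}-1$. The one non-elementary ingredient is the admissibility restriction $u\ge -N^*(t-\tau_i(t))$ inherited from $\var\in S^i(t)$, which gives $-\be_i(t)u\le \be_i(t)N^*(t-\tau_i(t))=c_i(t)$.

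In the case $u\ge 0$ the upper bound is immediate, since $e^{-\be_i(t)u}-1\le 0$ forces $f_i(t,\var)\le 0\le\la_{2,i}(t){\mathcal M}^i_t(-\var)$. For the lower bound I would apply the classical estimate $e^{-x}\ge 1-x$ with $x=\be_i(t)u\ge 0$, obtaining $e^{-\be_i(t)u}-1\ge -\be_i(t)u$, so that
$$
f_i(t,\var)\ge -\be_i(t)b_i(t)e^{-c_i(t)}u\ge -\la_{1,i}(t){\mathcal M}^i_t(\var),
$$
where the last step uses $0\le u\le {\mathcal M}^i_t(\var)$ together with the definition $\la_{1,i}(t)=\be_i(t)b_i(t)e^{-c_i(t)}$.

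In the case $u<0$ the lower bound is trivial because $f_i(t,\var)>0\ge -\la_{1,i}(t){\mathcal M}^i_t(\var)$. For the upper bound, which is the crux, I would apply $e^y-1\le y\,e^y$ (valid for $y\ge 0$) with $y=-\be_i(t)u>0$, yielding $e^{-\be_i(t)u}-1\le (-\be_i(t)u)\,e^{-\be_i(t)u}$. The admissibility bound $e^{-\be_i(t)u}\le e^{c_i(t)}$ then collapses $e^{-c_i(t)}e^{-\be_i(t)u}\le 1$, and therefore
$$
f_i(t,\var)\le b_i(t)\be_i(t)(-u)\le \be_i(t)b_i(t)\,{\mathcal M}^i_t(-\var)=\la_{2,i}(t){\mathcal M}^i_t(-\var).
$$
The main subtlety, and the reason $\la_{1,i}$ and $\la_{2,i}$ are asymmetric in \eqref{3.4}, lies precisely in this $u<0$ step: the factor $e^{-\be_i(t)u}$ that inevitably appears when linearizing $e^{-\be_i(t)u}-1$ from above is exactly absorbed by the $e^{-c_i(t)}$ sitting outside the brackets, thanks to the positivity requirement on $N(t)=N^*(t)+x(t)$.
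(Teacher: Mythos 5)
Your proof is correct and follows essentially the same route as the paper: a componentwise verification on the admissible set $S^i(t)$, with the lower bound coming from $e^{-y}-1\ge -y$ and the upper bound hinging on the admissibility constraint $\var(-\tau_i(t))\ge -N^*(t-\tau_i(t))$ to absorb the exponential factor into $e^{-c_i(t)}$. The paper packages your explicit inequality $e^{y}-1\le y\,e^{y}$ as a mean value theorem step (producing an intermediate point $d_i(t)>0$, hence $e^{-d_i(t)}\le 1$), but the two arguments are interchangeable.
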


\begin{proof} Write $c_i(t), f_i(t,\var)$ as in \eqref{3.3}. 
	Take $t\ge 0, i\in \{1,\dots, n\}$ and $\var\in S^i(t)$. We have
	$$e^{-\be_i(t)\var(-\tau_i(t))}-1\ge e^{-\be_i(t){\mathcal M}^i_t(\var)}-1\ge -\be_i(t){\mathcal M}^i_t(\var),$$
	thus
	$f_i(t,\var)\ge -\be_i(t)b_i(t)e^{-c_i(t)}{\mathcal M}^i_t(\var).$
	On the other hand, by Lagrange's intermediate value theorem, 
	$f_i(t,\var)=-b_i(t)\be_i(t)\var(-\tau_i(t)) e^{-d_i(t)}$ for some $d_i(t)$ between $c_i(t)$ and $c_i(t)+\be_i (t)\var(-\tau_i(t))>0$, hence
	$f_i(t,\var)\le \be_i(t)b_i(t){\mathcal M}^i_t(-\var).$
\end{proof}

The next criterion follows as an immediate consequence of  Theorem \ref{thm2.3}.

\begin{theorem}\label{thm3.1}
	Assume ($f_0$), ($i_0$)--($i_2$), and that there is a  positive $\omega$-periodic solution $N^*(t)$  of system \eqref{3.1}.  If there is  $T>\bar\tau$ such that $\al_1\al_2<1$ with
	\begin{equation*}  
	\begin{split}
	\al_1:&=\sup_{t\ge T}\int_{t-\tau(t)}^t \sum_{i=1}^n \be_i(s)b_i(s)e^{-\be_i(s)N^*(s-\tau_i(s))}B_i(s)e^{-\int_s^t a(u)\, du}\, ds\\
	\al_2:&=\sup_{t\ge T}\int_{t-\tau(t)}^t \sum_{i=1}^n \be_i(s)b_i(s)B_i(s)e^{-\int_s^t a(u)\, du}\, ds,\end{split}
	\end{equation*}
	where  $\dps B_i(t)=\max_{\th\in [-\tau_i(t),0]}\bigg (\prod_{k: t+\th\le t_k<t}(1+b_k)^{-1}\bigg)$, then  $N^*(t)$ is globally asymptotically stable, in the sense that it is stable and any positive solution $N(t)$ of \eqref{3.1} satisfies
	$\lim_{t\to\infty} \big (N(t)-N^*(t)\big )=0.$
\end{theorem}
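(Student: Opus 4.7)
The proof is essentially an assembly of the preparatory lemmas feeding into Theorem \ref{thm2.3}. The plan is as follows.

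First, perform the translation $x(t):=N(t)-N^*(t)$, which carries the positive $\omega$-periodic solution $N^*$ of \eqref{3.1} to the zero solution of the shifted impulsive DDE \eqref{3.2}, with $f(t,\var)=\sum_{i=1}^n f_i(t,\var_{|_{[-\tau_i(t),0]}})$ and $\tilde I_k$ as in \eqref{3.3}. Note that global asymptotic stability of $N^*$ (in the sense stated) is equivalent to global asymptotic stability of $x\equiv 0$ on the admissible set $S=\{\var\in PC([-\bar\tau,0];\R):\var\ge -N^*,\ \var(0)>-N^*(0)\}$ of the transformed system.

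Next, I would verify, one by one, the hypotheses of Theorem \ref{thm2.3} for \eqref{3.2}. Lemma \ref{lem3.2} already gives (H1) with the sequences $\tilde b_k=1+b_k>0$, $\tilde a_k=1+a_k$, together with (H2): $P_k=\prod_{i=1}^k\tilde a_i$ is bounded thanks to ($i_2$) and the $p$-periodicity of the impulses, while $\int_0^\infty a(u)\,du=\infty$ follows from $a$ being positive and $\omega$-periodic. Lemma \ref{lem3.3} provides the Yorke condition \eqref{2.24} with the specific $\la_{1,i},\la_{2,i}$ given by \eqref{3.4}. Substituting these $\la_{j,i}$ and the bounds $B_i(t)=\max_{\th\in[-\tau_i(t),0]}\prod_{k:t+\th\le t_k<t}\tilde b_k^{-1}=\max_{\th\in[-\tau_i(t),0]}\prod_{k:t+\th\le t_k<t}(1+b_k)^{-1}$ into formula \eqref{2.25} yields precisely the expressions for $\al_1,\al_2$ in the statement of the present theorem. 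Since by assumption $\al_1\al_2<1$ for some $T>\bar\tau$, all conditions of Theorem \ref{thm2.3} are in place.

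Applying Theorem \ref{thm2.3}, I conclude that $x\equiv 0$ is globally asymptotically stable for \eqref{3.2} (on the admissible set $S$, which is preserved because $N(t)>0$ for all $t\ge 0$ whenever $\phi\in C_0^+$ by ($i_0$)). Translating back via $N(t)=x(t)+N^*(t)$, this gives the stability of $N^*$ and $N(t)-N^*(t)\to 0$ as $t\to\infty$ for every positive solution $N$ of \eqref{3.1}, which is the desired conclusion. The only non-routine step is verifying that the shifted system \eqref{3.2} indeed falls within the framework of Theorem \ref{thm2.3} with the right identification of data; the positivity preservation of $N$ and admissibility of $S$ are handled by ($i_0$), and everything else is a direct matching of formulas.
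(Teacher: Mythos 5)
Your proposal is correct and follows exactly the paper's route: the paper states that Theorem \ref{thm3.1} ``follows as an immediate consequence of Theorem \ref{thm2.3}'', having already prepared the translation to \eqref{3.2} and verified (H1)--(H2) in Lemma \ref{lem3.2} and the Yorke condition \eqref{2.24} with the weights \eqref{3.4} in Lemma \ref{lem3.3}. Your write-up simply makes explicit the same assembly of those lemmas and the matching of \eqref{2.25} with the stated $\al_1,\al_2$.
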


Since $a(t),\tau(t)$ are uniformly bounded, condition \eqref{2.26} is trivially satisfied.
Taking
$\la_{j,i}(s)= \be_i(t) b_i(t),\, j=1,2,i=1,\dots, n$,  Corollary \ref{cor2.2} provides an alternative criterion:

\begin{theorem}\label{thm3.2}
	Assume ($f_0$), ($i_0$)--($i_2$), and that there is a  positive $\omega$-periodic solution $N^*(t)$  of system \eqref{3.1}.  Then  $N^*(t)$ is globally attractive if
	\begin{equation}\label{3.5}
	\sum_{i=1}^n \be_i(t)b_i(t)\max_{\th\in[-\tau_i(t),0]}\prod_{k:t_k\in [t+\th,t)} \!\!(1+b_k)^{-1}\le a(t),\q t\ge 0.
	\end{equation}
\end{theorem}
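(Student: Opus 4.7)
The plan is to apply Corollary \ref{cor2.2} to the transformed impulsive DDE \eqref{3.2}, obtained from \eqref{3.1} via $x(t)=N(t)-N^*(t)$. Most of the hypotheses have already been checked in this section: Lemma \ref{lem3.2} shows that \eqref{3.2} satisfies (H1) with $\tilde b_k=1+b_k$ and (H2), and Lemma \ref{lem3.3} gives the Yorke conditions \eqref{2.24} with
$$\lambda_{1,i}(t)=\beta_i(t)b_i(t)e^{-\beta_i(t)N^*(t-\tau_i(t))},\qquad \lambda_{2,i}(t)=\beta_i(t)b_i(t).$$
Thus it only remains to verify the sufficient condition at the end of Corollary \ref{cor2.2}, namely $\sum_{i=1}^n \lambda_{j,i}(t)B_i(t)\le a(t)$ for $j=1,2$, together with \eqref{2.26}.

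First I would observe that, since the lower impulse bounds for \eqref{3.2} are $\tilde b_k=1+b_k$, the functions $B_i$ prescribed by Theorem \ref{thm2.3} become
$$B_i(t)=\max_{\theta\in[-\tau_i(t),0]}\prod_{k:t_k\in[t+\theta,t)}(1+b_k)^{-1},$$
which are precisely the factors appearing in \eqref{3.5}. Substituting $\lambda_{2,i}(t)=\beta_i(t)b_i(t)$, the inequality for $j=2$ reduces to exactly \eqref{3.5}. For $j=1$, positivity of $N^*$ gives $e^{-\beta_i(t)N^*(t-\tau_i(t))}\le 1$, hence $\lambda_{1,i}\le\lambda_{2,i}$, so the $j=1$ inequality is also a consequence of \eqref{3.5}. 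Next, I would note that since $a(\cdot)$ and each $\tau_i(\cdot)$ are continuous and $\omega$-periodic, both are bounded, so \eqref{2.26} holds with $A=\limsup_{t\ge 0}\int_{t-\tau(t)}^t a(u)\,du<\infty$.

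Taking $c_1=c_2=1$, we have $\sqrt{c_1c_2}(1-e^{-A})=1-e^{-A}<1$, so the final assertion of Corollary \ref{cor2.2} applies to \eqref{3.2} and yields that its zero solution is globally asymptotically stable (on the admissible set $S$ of positive-shift initial data introduced before Lemma \ref{lem3.2}). Translating back to the original variable, any positive solution $N(t)$ of \eqref{3.1} satisfies $N(t)-N^*(t)\to 0$ as $t\to\infty$, which is the claimed global attractivity of $N^*$.

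Since the analytic heavy lifting is carried out by Corollary \ref{cor2.2} and by Lemmas \ref{lem3.2}--\ref{lem3.3}, no real obstacle arises. The only point requiring some care is the bookkeeping: one must recognise that the $B_i$ for \eqref{3.2} are built from $\tilde b_k=1+b_k$ rather than from the original $b_k$, so that \eqref{3.5} matches the $j=2$ condition verbatim and dominates the $j=1$ condition via $\lambda_{1,i}\le\lambda_{2,i}$.
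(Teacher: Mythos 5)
Your proposal is correct and follows essentially the same route as the paper, which obtains Theorem \ref{thm3.2} directly from Corollary \ref{cor2.2} with $c_1=c_2=1$, the Yorke functions from Lemma \ref{lem3.3} (the paper simply takes $\lambda_{1,i}=\lambda_{2,i}=\beta_i(t)b_i(t)$, which matches your observation that $\lambda_{1,i}\le\lambda_{2,i}$), the $B_i$ built from $\tilde b_k=1+b_k$, and the remark that \eqref{2.26} holds trivially by periodicity. Your write-up just makes explicit the bookkeeping the paper leaves implicit.
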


In what follows, we use the notations
$ \be(t)=\max_{1\le i\le n}\be_i(t)$ 
and, for an $\omega$-periodic function $g$,
$$\overline g=\max_{0\le t\le \omega}g(t).$$
Namely,  $\overline{N^*}=\dps\max_{0\le t\le\omega} N^*(t)$ and ${\overline{\be N^*}}=\dps\max_{0\le t\le\omega} \max_{1\le i\le n}\be_i(t)N^*(t)$.

\med

\begin{remark}\label{rmk3.1} \rm{ For Theorem \ref{thm3.2}, we used    conditions \eqref{2.24} with
		$\la_{1,i}(t)=\la_{2,i}(t)= \be_i(t) b_i(t), 1\le i\le n
		$;
		the choice of $\la_{1,i}(t)$ in \eqref{3.4} however provides  a better estimate. Alternatively, instead of $\la_{2,i}(t)= b_i(t)\be_i(t)$,
		one can choose
		\begin{equation}\label{3.6}
		\la_{2,i}(t)= {1\over{\overline{N^*}}}\, (e^{\overline{\be N^*}}-1)\, b_i(t)e^{-\be_i(t)N^*(t-\tau_i(t))}\,, \ t\ge 0,1\le i\le n.
		\end{equation}
		In fact, for $t\ge 0, i\in \{1,\dots, n\}$ and $\var\in S^i(t)$, we obtain the inequality
		$$e^{-\be_i(t)\var(-\tau_i(t))}-1\le e^{\be_i(t){\mathcal M}^i_t(-\var)}-1\le  {{e^{\overline{\be N^*}}-1}\over{\overline{N^*}}}{\mathcal M}_t^i(-\var),$$
		which yields
		$f_i(t,\var)\le \la_{2,i}(t){\mathcal M}^i_t(-\var)$  for $\la_{2,i}(t)$ as in \eqref{3.6}.
		Although the choice of $ \la_{2,i}(t)$ in \eqref{3.4}  is always better than the one above (because
		with $x=\overline{\be N^*}$ and $c_i(t)= \be_i(t)N^*(t-\tau_i(t))\le x$, we have  $(e^x-1)e^{-c_i(t)}-x\ge 0$ for $x\ge 0$),  the use of $ \la_{2,i}(t)$ as in \eqref{3.6}   turns out to be  effective for the special case of \eqref{3.1} analysed below.}
\end{remark}

We now proceed with a deeper analysis of the case of \eqref{3.1} with time independent delays multiple of the period,
$$\tau_i(t)\equiv m_i\omega,$$
where $m_i$ are positive integers, $i=1,\dots,n$. In this situation, \eqref{3.2} becomes
\begin{equation}\label{3.7}
\begin{split}
&x'(t)+a(t)x(t)=\sum_{i=1}^n b_i(t)e^{-\be_i(t)N^*(t)}\Big [e^{-\be_i (t)x(t-m_i\omega)}-1\Big],\q  0\le t\ne t_k, \\
&\Delta x(t_k) =\tilde I_k(x(t_k)),\q  k\in\N,
\end{split}
\end{equation}
where 
$\tilde I_k(u)=I_k\big (N^*(t_k)+u\big )-I_k\big (N^*(t_k)\big )$ are as in \eqref{3.3}.

%

In what follows  we establish  upper estimates for  $\al_1,\al_2$ defined by \eqref{2.25} which, although  not as sharp as the ones in Theorem \ref{thm3.1},  are easier to handle. To simplify the exposition, assume $I_k(0)=0$ for $ k\in \N$ -- which is natural from a biological point of view --, so that $I_k(N^*(t_k))\ge b_k N^*(t_k)$; otherwise, $I_k(N^*(t_k))\ge{I_k}(0)+b_kN^*(t_k), 1\le k\le p$,
and straightforward changes should be introduced in the computations below. 

\begin{theorem}\label{thm3.3}
	Consider \eqref{3.1} with $\tau_i(t)\equiv m_i\omega$ ($m_i\in\N$) and denote $\bar{m}=\max_{1\le i\le n} m_i$. Assume ($f_0$), ($i_0$)--($i_2$) with $I_k(0)=0$ for $k\in\N$, and that there is a  positive $\omega$-periodic solution $N^*(t)$  of system \eqref{3.1}.  If  
	\begin{equation}\label{3.8}
	\begin{split}
	B^{\bar{m}} \left(\overline {\be} \overline{ N^*}(e^{\overline {\be N^*}}-1)\right)^{1\over 2} 
	\Big (1-& e^{-\bar{m} \int_0^{\omega} a(u)\, du}\Big)\cdot\\
	&\cdot\left [1-\Big (1-e^{-\int_0^\omega a(u)\, du}\Big)^{-1}\sum_{k=1}^p \min (b_k,0) \right]<1,
	\end{split}
	\end{equation}
	where $\dps B=\max_{1\le l,j\le p}\prod_{k=1}^j(1+b_{l+k})^{-1}$,  then   $N^*(t)$ attracts any positive solution $N(t)$ of \eqref{3.1}. 
\end{theorem}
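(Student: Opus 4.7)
The plan is to reduce the claim, via $x(t)=N(t)-N^*(t)$ (which produces the impulsive DDE \eqref{3.7}), to the hypothesis $\al_1\al_2<1$ of Theorem \ref{thm2.3}, and then to evaluate $\al_1,\al_2$ in closed form by exploiting both the differential equation satisfied by $N^*$ between impulses and the identity $N^*(s-m_i\omega)=N^*(s)$ (a consequence of $\tau_i\equiv m_i\omega$ and $\omega$-periodicity of $N^*$).

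For the Yorke-type multipliers I would keep $\la_{1,i}$ as in \eqref{3.4} but upgrade $\la_{2,i}$ to the alternative from \eqref{3.6} of Remark \ref{rmk3.1}. Both contain the factor $e^{-\be_i(s)N^*(s-m_i\omega)}=e^{-\be_i(s)N^*(s)}$, so using the ODE for $N^*$ off the impulse set to rewrite $\sum_i b_i(s)e^{-\be_i(s)N^*(s)}=(N^*)'(s)+a(s)N^*(s)$, together with the estimate $B_i(s)\le B^{m_i}\le B^{\bar m}$, would give
\begin{align*}
\sum_{i=1}^n\la_{1,i}(s)B_i(s)&\le B^{\bar m}\,\overline{\be}\,\bigl[(N^*)'(s)+a(s)N^*(s)\bigr],\\
\sum_{i=1}^n\la_{2,i}(s)B_i(s)&\le B^{\bar m}\,\frac{e^{\overline{\be N^*}}-1}{\overline{N^*}}\,\bigl[(N^*)'(s)+a(s)N^*(s)\bigr].
\end{align*}
Both $\al_j$ from \eqref{2.25} (with $\tau(t)\equiv\bar m\omega$) are then controlled by the single integral
$$
J(t):=\int_{t-\bar m\omega}^{t}\bigl[(N^*)'(s)+a(s)N^*(s)\bigr]e^{-\int_s^t a(u)\,du}\,ds.
$$

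To estimate $J(t)$ I would observe that between impulses the integrand equals $\frac{d}{ds}\bigl(N^*(s)e^{-\int_s^t a(u)\,du}\bigr)$, so a piecewise application of the fundamental theorem of calculus, combined with $N^*(t_k^+)-N^*(t_k)=I_k(N^*(t_k))$, would yield
$$
J(t)=N^*(t)-N^*(t-\bar m\omega)e^{-\bar m A_\omega}-\!\!\!\sum_{t_k\in(t-\bar m\omega,\,t]}\!\!\!I_k(N^*(t_k))e^{-\int_{t_k}^t a(u)\,du},
$$
where $A_\omega=\int_0^\omega a(u)\,du$. The $\omega$-periodicity of $N^*$ collapses the boundary terms into $N^*(t)(1-e^{-\bar m A_\omega})$; hypothesis $(i_1)$ together with $I_k(0)=0$ gives $-I_k(N^*(t_k))\le -\min(b_k,0)\,\overline{N^*}$; and re-summing the $\bar mp$ impulse contributions by period, exploiting the $\omega$-periodicity of $a$ and of the impulse schedule, would produce the geometric progression $\sum_{j=0}^{\bar m-1}e^{-jA_\omega}=(1-e^{-\bar m A_\omega})/(1-e^{-A_\omega})$, leading to
$$
J(t)\le\overline{N^*}(1-e^{-\bar m A_\omega})\Bigl[1-\frac{\sum_{k=1}^p\min(b_k,0)}{1-e^{-A_\omega}}\Bigr].
$$

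Feeding this estimate back into $\al_1$ and $\al_2$ and taking a square root would reproduce exactly the left-hand side of \eqref{3.8} as an upper bound for $\sqrt{\al_1\al_2}$; hence \eqref{3.8} forces $\al_1\al_2<1$, and Theorem \ref{thm2.3} delivers the global attractivity of $N^*$. The step I expect to be the most delicate is the bookkeeping inside $J(t)$ — locating all impulses in $(t-\bar m\omega,t]$, signing the jump contributions correctly via $(i_1)$ and $I_k(0)=0$, and then re-summing the exponentially weighted jumps across the $\bar m$ periods as a geometric series. Once this arithmetic is carried out, the rest is a routine substitution into Theorem \ref{thm2.3}.
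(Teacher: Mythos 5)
Your plan is correct and follows essentially the same route as the paper's own proof: the same choice of $\la_{1,i}$ from \eqref{3.4} and $\la_{2,i}$ from \eqref{3.6}, the same use of the identity $\sum_i b_i(s)e^{-\be_i(s)N^*(s)}=(N^*)'(s)+a(s)N^*(s)$ to turn the integrand into an exact derivative, the same accounting of the jump terms via $I_k(N^*(t_k))\ge \min(b_k,0)\,N^*(t_k)$, and the same geometric resummation over the $\bar m$ periods before invoking Theorem \ref{thm2.3}. No substantive difference from the published argument.
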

\begin{proof} Condition ($i_2$) implies  $B\ge 1$  for $B$ defined above, hence    for $t\ge 0$ and $1\le i\le n$
	$$B_i(t):=\max_{\th\in [-m_i\omega,0]}\left (\prod_{k:t+\th\le t_k<t}(b_k+1)^{-1}\right)\le B^{m_i}\le B^{\bar{m}}.$$
	For the sake of simplicity, in what follows we  suppose that  the coefficients $b_k$ in  $(i_1)$ satisfy $b_k\in (-1,0]\, (1\le k\le p)$; otherwise we may replace $b_k$ by $\min\{ 0,b_k\}$.

	Now, choose  $\la_{1,i}$ as in \eqref{3.4} and $\la_{2,i}$  as in \eqref{3.6}. For \eqref{3.7} we obtain
	$$\la_{1,i}(t)= \be_i(t)b_i(t)e^{-\be_i(t)N^*(t)},\ \la_{2,i}(t)=  {1\over{\overline{N^*}}}\, (e^{\overline{\be N^*}}-1)  b_i(t)e^{-\be_i(t)N^*(t)},\,  0\le t\ne t_k.$$
	Since $N^*(t)$ is an $\omega$-periodic solution of  \eqref{3.1},  for $t>0, t\ne t_k,$
	\begin{equation}\label{3.9}
	\sum_{i=1}^n b_i(t) e^{-\be_i(t)N^*(t)}=(N^*)'(t)+a(t)N^*(t),
	\end{equation}
	with $N^*(t)$ having possible jumps at the points $t_k$. 
	From \eqref{3.9}, for $t>0$ we  derive
	\begin{equation}\label{3.10}
	\begin{split}
	\al_1(t)&:=\int_{t-\bar{m}\omega}^t \sum_{i=1}^n \la_{1,i}(s)B_i(s)e^{-\int_s^t a(u)\, du}\,  ds\\
	&\le \bar \be B^{\bar{m}} \int_{t-\bar{m}\omega}^t {d\over {ds}}\Big [N^*(s)e^{-\int_s^t a(u)\, du}\Big ]\, ds\\
	&= \bar \be B^{\bar{m}}  \left [N^*(t)\Big (1- e^{-\bar{m} \int_0^{\omega} a(u)\, du}\Big)-\sum_{k:t_k\in [t-\bar{m}\omega,t)} I_k(N^*(t_k))e^{-\int_{t_k}^t a(u)\, du}\right] \\
	&\le \bar \be B^{\bar{m}}  \left [N^*(t)\Big (1- e^{-\bar{m} \int_0^{\omega} a(u)\, du}\Big)-\sum_{k:t_k\in [t-\bar{m}\omega,t)} b_kN^*(t_k)e^{-\int_{t_k}^t a(u)\, du}\right] .
	\end{split}
	\end{equation}
	With $z_k=b_kN^*(t_k)$, we have
	\begin{equation}\label{3.11}
	\begin{split}
	\sum_{k:t_k\in [t-\bar{m}\omega,t)} z_ke^{-\int_{t_k}^t a(u)\, du}
	& =\left (\sum _{k:t_k\in [t-\omega,t)} z_ke^{-\int_{t_k}^t a(u)\, du}\right)\cdot\\
	&\,\,\,\,\,\cdot \left (1+e^{-\int_0^\omega a(u)\, du}+\cdots+
	e^{-(\bar{m}-1)\int_0^\omega a(u)\, du}\right)\\
	&\ge \left (\sum _{k=1}^p z_k\right)
	{{1-e^{-\bar{m}\int_0^\omega a(u)\, du}}\over {1-e^{-\int_0^\omega a(u)\, du}}}.
	\end{split}
	\end{equation}
	The estimates \eqref{3.10} and \eqref{3.11} yield
	\begin{equation*}
	\al_1(t)
	\le  B^{\bar{m}} \bar \be  \overline{N^*}\Big (1- e^{-\bar{m} \int_0^{\omega} a(u)\, du}\Big)
	\left [1-\Big (1-e^{-\int_0^\omega a(u)\, du}\Big)^{-1}\sum_{k=1}^p b_k\right]=:\sigma_1.\end{equation*}
	In a similar way, we obtain
	\begin{equation*}
	\begin{split}
	\al_2(t)&:=\int_{t-\bar{m}\omega}^t \sum_{i=1}^n \la_{2,i}(s)B_i(s)e^{-\int_s^t a(u)\, du}\, ds\\
	&\le  B^{\bar{m}}(e^{\overline {\be N^*}}-1)\Big (1- e^{-\bar{m} \int_0^{\omega} a(u)\, du}\Big) \left [1-\Big (1-e^{-\int_0^\omega a(u)\, du}\Big)^{-1}\sum_{k=1}^p b_k\right]=: \sigma_2.
	\end{split}
	\end{equation*}
	Clearly, condition $\sigma_1\sigma_2<1$ is equivalent to \eqref{3.8}.
\end{proof}

As  a by-product, we obtain some results  for DDEs without impulses by setting $b_k=a_k=0$ for $1\le k\le p$ in the above theorems.

\begin{corollary}\label{cor3.1}   Consider 
	\begin{equation}\label{3.14}
	N'(t)+a(t)N(t)=\sum_{i=1}^n b_i(t)e^{-\be_i (t)N(t-m_i\omega)},\q t\ge 0,
	\end{equation}
	where $\omega>0, m_i\in\N$ and the coefficient functions satisfy ($f_0$). Let $\bar{m}= \max _{1\le i\le n} m_i$.  Then, there is a  positive $\omega$-periodic solution $N^*(t)$, which is a global attractor of all positive solutions if one of the following conditions holds:\vskip 0cm
	(i)  $\al_1\al_2<1$ for
	\begin{equation*}
	\begin{split}
	& \al_1=\sup_{t\in [0,\omega]}\int_{t-m\omega}^t\sum_{i=1}^n \be_i(s)b_i(s)e^{-\be_i(s)N^*(s)}e^{-\int_s^t a(u)\, du}  ds \\
	&\al_2=\sup_{t\in [0,\omega]}\int_{t-m\omega}^t\sum_{i=1}^n \be_i(s)b_i(s)e^{-\int_s^t a(u)\, du}  ds;
	\end{split}
	\end{equation*}\vskip 0cm
	(ii) $\dps\sum_{i=1}^n \be_i(t)b_i(t)\le a(t),\, t\ge 0;$ \vskip 0cm
	(iii)
	$ \left(\overline {\be}\, \overline{ N^*}(e^{\overline {\be}\, \overline{ N^*}}-1)\right)^{1\over 2} 
	\Big (1- e^{-\bar{m} \int_0^{\omega} a(u)\, du}\Big)<1.$
\end{corollary}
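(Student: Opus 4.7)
The plan is to obtain Corollary \ref{cor3.1} as a direct specialization of Lemma \ref{lem3.1} together with Theorems \ref{thm3.1}--\ref{thm3.3} to the impulse-free setting, by formally taking $I_k\equiv 0$ (and hence $a_k=b_k=0$ in ($i_1$), so that ($i_2$) is trivially verified).

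First I would establish existence of a positive $\omega$-periodic solution $N^*(t)$ of \eqref{3.14}. Since this equation is precisely \eqref{3.1} with $I_k\equiv 0$, one has $I^\infty=0$, hence $CI^\infty=0<1$, and Lemma \ref{lem3.1} supplies such an $N^*$. Next I would record the simplifications produced by the absence of impulses: for every $t$ and every $i$, the product $\prod_{k:t+\theta\le t_k<t}(1+b_k)^{-1}$ is empty, so $B_i(t)\equiv 1$, and likewise $B=1$ in Theorem \ref{thm3.3}. Moreover, the $\omega$-periodicity of $a,b_i,\beta_i$ together with the constant delays $\tau_i\equiv m_i\omega$ reduces the suprema over $t\ge T$ in \eqref{2.25} (choosing $T=\bar{m}\omega$) to suprema over a single period $[0,\omega]$.

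With these reductions in hand, each case becomes a one-line application of a previous theorem. For (i), substituting $B_i\equiv 1$ into Theorem \ref{thm3.1} yields exactly the stated $\alpha_1,\alpha_2$, and attractivity is immediate. For (ii), inequality \eqref{3.5} of Theorem \ref{thm3.2} collapses to $\sum_i\beta_i(t)b_i(t)\le a(t)$, which is the hypothesis. For (iii), in Theorem \ref{thm3.3} the sum $\sum_{k=1}^p\min(b_k,0)$ vanishes, so the bracketed factor in \eqref{3.8} equals $1$; the elementary bound $\overline{\beta N^*}\le\bar{\beta}\,\overline{N^*}$ then shows that the hypothesized inequality is stronger than \eqref{3.8}, and global attractivity follows.

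I do not anticipate a genuine obstacle. The only point requiring inspection is that the proofs of Theorems \ref{thm3.1}--\ref{thm3.3} remain valid when the sequence of impulse times is empty, which amounts to checking that the transformed equation \eqref{3.2} still satisfies (H1), (H2) and the Yorke bounds with all impulse-related factors set equal to one; this is a routine verification rather than a real difficulty.
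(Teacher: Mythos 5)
Your proposal is correct and follows essentially the same route as the paper, which obtains Corollary \ref{cor3.1} precisely by setting $a_k=b_k=0$ (i.e.\ $I_k\equiv 0$, so $B_i\equiv 1$ and the impulse sums vanish) in Theorems \ref{thm3.1}--\ref{thm3.3}, with existence of $N^*$ covered by Lemma \ref{lem3.1}. Your additional observations --- the reduction of the suprema to one period by periodicity and the bound $\overline{\be N^*}\le \overline{\be}\,\overline{N^*}$ needed to match condition (iii) with \eqref{3.8} --- are exactly the routine checks the paper leaves implicit.
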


\begin{corollary}\label{cor3.2}  For the DDE
	\begin{equation}\label{3.15}
	N'(t)+a(t)N(t)=b(t)e^{-N(t-m\omega)},\q t\ge 0,
	\end{equation}
	where $\omega>0, m\in\N$ and $a(t), b(t)$ are positive, $\omega$-periodic and continuous functions, there is a  positive $\omega$-periodic solution $N^*(t)$, which is a global attractor of all positive solutions if 
	$$\overline{N^*}\Big (1-e^{-m\int_0^\omega a(u)\, du}\Big )e^{-m\int_0^\omega a(u)\, du}
	\sup_{t\in [0,\omega]}\int_0^{m\omega}\!\! b(t+s)e^{\int_t^{t+s} a(u)\, du}\, ds<1.
	$$
\end{corollary}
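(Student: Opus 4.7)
The plan is to realise Corollary \ref{cor3.2} as the specialisation $n=1$, $\beta_1\equiv 1$, $b_1=b$, $m_1=m$ of Corollary \ref{cor3.1}(i), by evaluating the two quantities $\alpha_1,\alpha_2$ explicitly using the ODE for $N^*$ and the $\omega$-periodicity of $a$ and $b$. Existence of a positive $\omega$-periodic solution is already built into Corollary \ref{cor3.1}, so the task is reduced to proving that the product $\alpha_1\alpha_2$ coincides with the left-hand side of the displayed condition, where
$$\alpha_1=\sup_{t\in[0,\omega]}\int_{t-m\omega}^{t} b(s)e^{-N^*(s)}e^{-\int_s^{t}a(u)\,du}\,ds,\q
\alpha_2=\sup_{t\in[0,\omega]}\int_{t-m\omega}^{t} b(s)e^{-\int_s^{t}a(u)\,du}\,ds.$$

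For $\alpha_1$ the key identity is that $N^*$ solves \eqref{3.15}, so with $A$ as in \eqref{2.4},
$$\frac{d}{ds}\bigl[N^*(s)e^{A(s)}\bigr]=e^{A(s)}\bigl[(N^*)'(s)+a(s)N^*(s)\bigr]=e^{A(s)}b(s)e^{-N^*(s)}.$$
Substituting and applying the fundamental theorem of calculus gives
$$\alpha_1=\sup_{t}e^{-A(t)}\bigl[N^*(t)e^{A(t)}-N^*(t-m\omega)e^{A(t-m\omega)}\bigr]=\sup_{t}N^*(t)\Bigl(1-e^{-m\int_0^\omega a(u)\,du}\Bigr)=\overline{N^*}\Bigl(1-e^{-m\int_0^\omega a(u)\,du}\Bigr),$$
using $A(t)-A(t-m\omega)=m\int_0^\omega a(u)\,du$ together with $N^*(t-m\omega)=N^*(t)$.

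For $\alpha_2$ the substitution $s=t-m\omega+r$, combined with the periodicity identities $b(t-m\omega+r)=b(t+r)$ and $\int_{t-m\omega+r}^{t}a(u)\,du=m\int_0^\omega a(u)\,du-\int_t^{t+r}a(u)\,du$, yields
$$\alpha_2=e^{-m\int_0^\omega a(u)\,du}\sup_{t\in[0,\omega]}\int_0^{m\omega}b(t+s)e^{\int_t^{t+s}a(u)\,du}\,ds,$$
where the restriction of the supremum to $[0,\omega]$ is justified by the $\omega$-periodicity of the integrand in $t$. Multiplying $\alpha_1$ and $\alpha_2$ produces exactly the left-hand side of the condition stated in Corollary \ref{cor3.2}, so $\alpha_1\alpha_2<1$ and the conclusion follows from Corollary \ref{cor3.1}(i). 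The only delicate point is the periodicity bookkeeping in the reorganisation of $\alpha_2$; beyond that, everything reduces to the single derivative trick that collapses $\alpha_1$ to a multiple of $\overline{N^*}$.
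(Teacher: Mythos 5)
Your proposal is correct and follows essentially the same route as the paper: the paper likewise reduces to the Yorke framework (citing Lemma \ref{lem3.3} and Corollary \ref{cor2.1} directly, which amounts to your invocation of Corollary \ref{cor3.1}(i) with $n=1$, $\beta_1\equiv 1$), collapses $\alpha_1$ to $\overline{N^*}\bigl(1-e^{-m\int_0^\omega a(u)\,du}\bigr)$ via the identity $\frac{d}{ds}\bigl[N^*(s)e^{-\int_s^t a(u)\,du}\bigr]=b(s)e^{-N^*(s)}e^{-\int_s^t a(u)\,du}$, and rewrites $\alpha_2$ by the same shift-and-periodicity substitution. No gaps.
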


\begin{proof} Here, we use Lemma \ref{lem3.3} and Corollary \ref{cor2.1} directly. The Yorke condition \eqref{2.2} is satisfied with
	$\la_1(t)=b(t)e^{-N^*(t)}$ and $\la_2(t)=b(t)$. For $\al_1(t),\al_2(t)$ given by \eqref{2.21} we obtain
	\begin{equation*}
	\begin{split}
	\al_1:=\al_1(t)&=\sup_{t\in [0,\omega]}\int_{t-m\omega}^t b(s)e^{-N^*(s)}e^{-\int_s^t a(u)\, du} \, ds\\
	&=\sup_{t\in [0,\omega]}\int_{t-m\omega}^t {d\over {ds}} \left [ N^*(s)e^{-\int _s^t a(u)\, du}\right]\, ds\\
	&=\sup_{t\in [0,\omega]}N^*(t)\Big (1-e^{-m\int_0^\omega a(u)\, du}\Big )=\overline{N^*}\Big (1-e^{-m\int_0^\omega a(u)\, du}\Big )
	\end{split}
	\end{equation*}
	and
	\begin{equation*}
	\begin{split}
	\al_2:=\al_2(t)&=\sup_{t\in [0,\omega]}\int_{t-m\omega}^t\!\!\! b(s)e^{-\int_s^t a(u)\, du}  ds\\
	&=e^{-m\int_0^\omega a(u)\, du}\sup_{t\in [0,\omega]}\int_0^{m\omega}\!\!\! b(t+s)e^{\int_t^{t+s} a(u)\, du} ds.
	\end{split}
	\end{equation*}
\end{proof}

\begin{remark}\label{rmk3.2} \rm{ Corollary \ref{cor3.2} is easily extended to \eqref{3.14}, however our interest here is to compare the statement in Corollary  \ref{cor3.2}  with   \cite{4.}. Using an iterative technique, Graef et al. \cite{4.}.  showed that the $\omega$-periodic positive solution $N^*(t)$  of \eqref{3.15} is globally attractive if 
		$$\sigma:=\int_0^{m\omega} b(s)e^{-N^*(s)} \, ds\le 1.$$
		With the notations of the above proof, clearly $\al_1<\sigma$. On the other hand, 
		observe that $\sigma =\int_0^{m\omega}  a(s)N^*(s)\, ds$; 
		whether $\al_2\le\sigma$ or not depends on the relative sizes of the functions $a(t),b(t)$ and the delay $m\omega$. Therefore, the criteria in Corollary \ref{cor3.2} and in  \cite{4.} are not comparable without further information on the coefficient functions and delay.}
\end{remark}

We now study the particular case of \eqref{3.1}  with $\tau_i(t)\equiv m_i\omega$ ($m_i$ positive integers) and  the impulses given by $I_k(u)=b_k u$:
\begin{equation}\label{3.16}
\begin{split}
&N'(t)+a(t)N(t)=\sum_{i=1}^n b_i(t)e^{-\be_i (t)N(t-m_i\omega)},\q 0\le t\ne t_k, \\
&\Delta N(t_k)=b_kN(t_k),\q k=1,2,\dots. 
\end{split}
\end{equation}
As before, we assume that $a(t),b_i(t),\be_i(t)$ are as in
($f_0$),  and that  ($i_0$) holds, i.e.,   $b_k>-1$ for $k\in\N$, $0<t_1<t_2<\cdots<t_p<\omega$ and 
\begin{equation}\label{3.17}t_{k+p}=t_k+\omega,\q b_{k+p}=b_k,\q \forall k\in\N, \end{equation}
where $p$ is some  positive integer. In this setting, assumption ($i_2$) translates as
\begin{equation}\label{3.18}\prod_{i=1}^p (1+b_k)\le 1. \end{equation}

Under these assumptions,  the existence of a  positive $\omega$-periodic solution  $N^*(t)$  of \eqref{3.16} follows from  \cite{6.}. The next theorem recovers the criterion  for its global asymptotic stability  established  in  \cite{6.}.

\begin{theorem}\label{thm3.4} 
	Consider system \eqref{3.16} with $b_k>-1$ for all $k$, and assume
	($f_0$), \eqref{3.17} and \eqref{3.18}. For  $N^*(t)$  a  positive $\omega$-periodic solution, whose existence is given in  \cite{6.}, $N^*(t)$ is globally attractive if 
	\begin{equation}\label{3.19}\sum_{i=1}^n \left (\prod_{k=1}^p (1+b_k)^{-m_i}\right) b_i(t)\be_i(t)\le a(t),\q t\ge 0. \end{equation}
\end{theorem}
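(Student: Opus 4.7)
The plan is to exploit the two distinctive features of \eqref{3.16}—linear impulses and constant delays equal to integer multiples of the period—to reduce the problem to a continuous DDE, and then apply Corollary \ref{cor2.2} with $c_1=c_2=1$.

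First, set $x(t)=N(t)-N^*(t)$, which takes \eqref{3.16} to \eqref{3.7} with the impulses still linear, $\tilde I_k(u)=b_ku$. For linear impulses the jump factors $J_k(u)=(1+b_k)^{-1}$ of Lemma \ref{lem2.1} are constants, so the auxiliary function $y(t):=K(t)^{-1}x(t)$, with $K(t):=\prod_{0\le t_k<t}(1+b_k)$ (and $K\equiv 1$ on $(-\infty,0]$), is continuous on $[0,\infty)$ and satisfies
$$
y'(t)+a(t)y(t)=K(t)^{-1}\sum_{i=1}^n b_i(t)e^{-\be_i(t)N^*(t)}\bigl[e^{-\be_i(t)x(t-m_i\omega)}-1\bigr].
$$
The pivotal observation is that \eqref{3.17} gives $K(t+\omega)=\pi K(t)$ with $\pi:=\prod_{k=1}^p(1+b_k)\le 1$ by \eqref{3.18}, so that $K(t-m_i\omega)=\Pi_iK(t)$, where $\Pi_i:=\pi^{-m_i}=\prod_{k=1}^p(1+b_k)^{-m_i}\ge 1$. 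Substituting $x(t-m_i\omega)=\Pi_iK(t)y(t-m_i\omega)$ then causes the $K(t)$ prefactors to cancel.

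Next, I verify that the right-hand side of the $y$-equation, viewed as a functional of $y_t$, satisfies the Yorke inequalities \eqref{2.24} with
$$
\la_{1,i}^y(t)=\Pi_i\be_i(t)b_i(t)e^{-\be_i(t)N^*(t)},\qquad \la_{2,i}^y(t)=\Pi_i\be_i(t)b_i(t).
$$
The lower estimate is immediate from $e^{-z}-1\ge -z$. The upper estimate mirrors the Lagrange argument in the proof of Lemma \ref{lem3.3}: translating back via $\Pi_iK(t)y(t-m_i\omega)=x(t-m_i\omega)$ and using the biological constraint $x(t-m_i\omega)\ge -N^*(t-m_i\omega)=-N^*(t)$ forces the Lagrange intermediate point to be non-negative, so that its exponential factor is at most $1$.

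Finally, since the $y$-equation has no impulses, the factors $B_i$ appearing in Corollary \ref{cor2.2} are all equal to $1$, and $\la_{j,i}^y(t)\le\Pi_i\be_i(t)b_i(t)$, so condition \eqref{3.19} reads $\sum_{i=1}^n\la_{j,i}^y(t)\le a(t)$ for $j=1,2$. Hypothesis (H1) is trivial in the absence of impulses, (H2) holds because $a$ is positive and $\omega$-periodic, and \eqref{2.26} holds because $\tau(t)\equiv\bar m\omega$ is bounded; the final clause of Corollary \ref{cor2.2} therefore yields $y(t)\to 0$ as $t\to\infty$. Since $\pi^n\le 1$ for all $n\ge 0$, $K$ is uniformly bounded above by $\max_{0\le j\le p}\prod_{k=1}^j(1+b_k)$, so $x(t)=K(t)y(t)\to 0$ and hence $N(t)-N^*(t)\to 0$. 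The hard part will be the verification of the upper Yorke bound, because the exponential $e^{-\be_i(t)\Pi_iK(t)y(t-m_i\omega)}$ is a priori unbounded in $y$; the control rests on translating back to $x$ and using the positivity of $N$ at exactly the right place.
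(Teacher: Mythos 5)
Your proof is correct and takes essentially the same route as the paper's: the substitution $y(t)=K(t)^{-1}x(t)$ is exactly the change of variables \eqref{3.20}, your $y$-equation is \eqref{3.21} with $\tilde b_i=b_iK^{-1}$ and $\tilde \be_i=\be_i K(\cdot-m_i\omega)=\be_i\Pi_iK$, and the paper likewise verifies the Yorke condition for the impulse-free equation with $\la_1=\la_2=\sum_i\tilde b_i\tilde\be_i=\sum_i\Pi_i b_i\be_i$ before invoking Corollary \ref{cor2.2}. Your handling of the Lagrange intermediate point (pulling back to $x$ and using $N>0$) is precisely the argument of Lemma \ref{lem3.3} that the paper reuses.
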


\begin{proof} After translating the $\omega$-periodic solution $N^*(t)$ to the origin, we obtain system \eqref{3.7} with $\tilde I_k(x(t_k))=b_k x(t_k),\, k=1,2,\dots$. We now effect the change of variables  in \eqref{2.8}, 
	which in this situation reads as
	\begin{equation}\label{3.20}
	y(t)=\prod_{k:0\le t_k<t}(1+b_k)^{-1} x(t), \end{equation}
	and obtain an equivalent DDE with piecewise coefficients but no impulses, given by
	\begin{equation}\label{3.21}
	y'(t)+a(t)y(t)=\sum_{i=1}^n \tilde b_i(t)e^{-\be_i(t)N^*(t)}\left (e^{-\tilde \be_i(t)y(t-m_i\omega)}-1\right),
	\end{equation}
	where
	$$\tilde b_i(t)=b_i(t)\prod_{k:0\le t_k<t}(1+b_k)^{-1},\ \tilde \be_i(t)=\be_i(t) \prod_{k:0\le t_k<t-m_i\omega}(1+b_k),\q i\le i\le n.$$
	Reasoning as in Lemma \ref{lem3.3},   we deduce that for \eqref{3.21} the Yorke condition \eqref{2.2} holds with
	$\la_1(t)=\la_2(t)=\sum_{i=1}^n \tilde b_i(t) \tilde \be_i(t)$. Next, we observe that
	$$\sum_{i=1}^n \tilde b_i(t) \tilde \be_i(t)=\sum_{i=1}^n \left (\prod_{k=1}^p (1+b_k)^{-m_i}\right) b_i(t)\be_i(t),$$
	and apply Corollary \ref{cor2.2} to \eqref{3.21} to  obtain the result.\end{proof}

\begin{remark}\label{rmk3.3} \rm{Yan \cite{12.} considered model \eqref{3.16} with the following additional constraint:  the function
		$$t\mapsto \Theta (t):=\prod_{0\le t_k<t} (1+b_k)$$
		is $\omega$-periodic. 
		As pointed out by Liu and Takeuchi \cite{6.},  the condition of $ \Theta (t)$ being $\omega$-periodic is too restrictive:  it implies \eqref{3.17} and that   $\prod_{i=1}^p (1+b_k)=1$. 
		Under these assumptions, Yan \cite{12.} gave additional sufficient conditions for the existence and global attractivity of a unique $\omega$-periodic solution of \eqref{3.16}. Nevertheless, Liu and Takeuchi  remarked that Yan's  proof was not complete.  In turn,  they   proved themselves  the existence of a positive $\omega$-periodic solution $N^*(t)$ to \eqref{3.16} if
		$$\prod_{i=1}^p (1+b_k)e^{-\int_0^\omega a(s)\, ds}<1,$$
		which is always true if \eqref{3.18} is satisfied,
		and  showed that $N^*(t)$ is globally attractive if, in addition to
		\eqref{3.18}, the above condition \eqref{3.19} was imposed.  
		The technique in \cite{6.}   is quite different from the approach  in  \cite{12.}: the latter adapts the method in \cite{4.}, whereas the main idea in \cite{6.} is to assume  a Yorke-type condition of the form  \eqref{2.24}, and use  \cite{9.}. Although this scenario is  a particular situation of the general situation treated in our Corollary \ref{cor2.2}, the work in \cite{6.} was  a  strong motivation for the investigation carried out in this section.}\end{remark}


For the particular case of system \eqref{3.16} under conditions  ($f_0$), ($i_0$) and \eqref{3.18}, let $N^*(t)$ be a positive $\omega$-periodic  solution. Now, rather than \eqref{3.20}, we use an alternative  change of variables: 
\begin{equation}\label{3.22}
y(t)={{N(t)}\over{N^*(t)}}-1.
\end{equation}
This   leads to the equivalent system 
\begin{equation}\label{3.23}y'(t)=-r(t)y(t)+{1\over{N^*(t)}}\sum_{i=1}^n b_i(t)e^{-\be_i(t)N^*(t)}\Big [e^{-\be_i(t)N^*(t)y(t-m_i\omega)}-1\Big],\ 0\le t\ne t_k,
\end{equation}
with  $\Delta y(t_k) =0$ for $k\in\N$, where  $r(t)$  is a piecewise continuous functions defined by
$$r(t)={1\over{N^*(t)}}\sum_{i=1}^n b_i(t)e^{-\be_i(t)N^*(t)}.
$$  
In other words, \eqref{3.23} is  a DDE with piecewise continuous coefficients and no impulses.
As before, let $\bar{m}=\max_{1\le i\le n}m_i$. In this situation, the set of admissible initial conditions for \eqref{3.23} is
$S_1=\{\var \in C([-\bar{m}\omega,0]): \var (\th)\ge -1\ {\rm for} \ -\bar{m}\omega\le \th <0,\ \var (0)>-1\}.$

\med

\begin{remark}\label{rmk3.4} \rm{For the situation of \eqref{3.1} with $\tau_i(t)\equiv m_i\omega$ and impulsive functions $I_k$ satisfying $(i_1)$ with $b_k<a_k$ for some $k\in\{1,\dots,p\}$,  the change of variables \eqref{3.22} is not suitable for our purposes, since it transforms \eqref{3.1} into \eqref{3.23} together with the impulsive conditions
		$$
		\Delta y(t_k) =\hat I_k(y(t_k)),\q \, k\in\N,$$
		where $\hat I_k$ satisfy
		$$ {{1+b_k}\over {1+a_k}}x^2\leq x[x+\hat I_k(x)]\leq {{1+a_k}\over {1+b_k}}x^2,\q x\in\R,\q k\in\N.
		$$
		Therefore (H2)(i) is never satisfied, and the results in Section 2 cannot be invoked.}\end{remark}

We now apply Corollary \ref{cor2.1}    to the non-impulsive equation \eqref{3.23}.

\begin{theorem}\label{thm3.5} Consider system \eqref{3.16} with $b_k>-1$ for all $k$, assume
	($f_0$), \eqref{3.17}, \eqref{3.18}, and let $N^*(t)$ be an $\omega$-periodic positive solution. 
	If  $\al_1\al_2<1$, where
	\begin{equation}\label{3.24}
	\begin{split}
	\al_1&:=\sup _{t\in [0,\omega]} {1\over{N^*(t)}} \int_{t-\bar{m}\omega}^t\sum_{i=1}^n b_i(s)\be_i(s)N^*(s) e^{-\be_i(s)N^*(s)-\int_s^t a(u)\, du} 
	\prod_{k:s\le t_k<t} (1+b_k)\, ds\\
	\al_2&:=\sup _{t\in [0,\omega]} {1\over{N^*(t)}}\int_{t-\bar{m}\omega}^t \sum_{i=1}^n b_i(s)\be_i(s) N^*(s) e^{-\int_s^t a(u)\, du} 
	\prod_{k:s\le t_k<t} (1+b_k)\, ds,
	\end{split}
	\end{equation}
	where  $\dps \bar{m}=\max_{1\le i\le p} m_i$, then  $\lim_{t\to\infty} \big (N(t)-N^*(t)\big )=0$ for  any positive solution $N(t)$ of \eqref{3.16}.
\end{theorem}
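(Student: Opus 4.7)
The plan is to reduce \eqref{3.16} to the non-impulsive DDE \eqref{3.23} via the change of variables $y(t)=N(t)/N^*(t)-1$, and then apply Corollary \ref{cor2.1} (or its straightforward extension to sums of multi-delay terms, along the lines of Theorem \ref{thm2.3} specialised to the case without impulses) to the resulting equation. The $\alpha_j$ that appear naturally from the stability criterion applied to \eqref{3.23} will be seen to coincide with the $\alpha_j$ of \eqref{3.24} via a single algebraic identity obtained from the evolution law of $N^*$. Since $N^*$ is positive and $\omega$-periodic, it is bounded away from zero, so once the zero solution of \eqref{3.23} is shown to be a global attractor on the admissible set $S_1$, the conclusion $N(t)-N^*(t)=N^*(t)y(t)\to 0$ follows immediately.

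The change of variables is well defined and, crucially, eliminates the impulses: since $N(t_k^+)=(1+b_k)N(t_k)$ and $N^*(t_k^+)=(1+b_k)N^*(t_k)$, the ratio $N/N^*$ is continuous at each $t_k$, so $y$ satisfies \eqref{3.23} with $\Delta y(t_k)=0$. Writing $g_i(t,\varphi)=\frac{b_i(t)e^{-\beta_i(t)N^*(t)}}{N^*(t)}\bigl[e^{-\beta_i(t)N^*(t)\varphi(-m_i\omega)}-1\bigr]$, I would verify the Yorke condition \eqref{2.24} for \eqref{3.23} as follows: the elementary inequality $e^{-x}\ge 1-x$ yields $g_i(t,\varphi)\ge -\lambda_{1,i}(t){\mathcal M}^i_t(\varphi)$ with $\lambda_{1,i}(t)=b_i(t)\beta_i(t)e^{-\beta_i(t)N^*(t)}$; and since $\varphi\in S_1$ forces $-\varphi(-m_i\omega)\le 1$, the bound $(e^x-1)/x\le e^x\le e^{\beta_i(t)N^*(t)}$ (valid on $0\le x\le \beta_i(t)N^*(t)$) gives $g_i(t,\varphi)\le \lambda_{2,i}(t){\mathcal M}^i_t(-\varphi)$ with $\lambda_{2,i}(t)=b_i(t)\beta_i(t)$. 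Since $r(t)$ is positive, $\omega$-periodic, and piecewise continuous, $\int_0^\infty r(u)\,du=\infty$, so the analogue of (H2)(ii) is satisfied and the stability criterion applies, yielding global attractivity of the zero solution of \eqref{3.23} provided that $\tilde\alpha_1\tilde\alpha_2<1$, where
$$\tilde\alpha_j=\sup_{t\ge T}\int_{t-\bar m\omega}^{t}\sum_{i=1}^n\lambda_{j,i}(s)\,e^{-\int_s^t r(u)\,du}\,ds,\qquad j=1,2.$$

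The final step is to identify $\tilde\alpha_j$ with the $\alpha_j$ of \eqref{3.24}. Since $(N^*)'(t)+a(t)N^*(t)=\sum_i b_i(t)e^{-\beta_i(t)N^*(t-m_i\omega)}=\sum_i b_i(t)e^{-\beta_i(t)N^*(t)}$ (using $N^*(t-m_i\omega)=N^*(t)$), one has $(\log N^*)'(t)=r(t)-a(t)$ between consecutive impulses, together with jumps $\log(1+b_k)$ at $t_k$. Integrating from $s$ to $t$ produces the key identity
$$e^{-\int_s^t r(u)\,du}=\frac{N^*(s)}{N^*(t)}\prod_{k:\,s\le t_k<t}(1+b_k)\,e^{-\int_s^t a(u)\,du},$$
and substituting this into $\tilde\alpha_j$ exactly reproduces the expressions in \eqref{3.24}. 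The main obstacle I anticipate is the Yorke-type upper bound: the exponential nonlinearity does not admit a global linear majorant, and $\lambda_{2,i}=b_i\beta_i$ is valid only on the admissible set $S_1$. This is however compatible with the admissibility framework set up in Section \ref{seccao-estabilidade} (alternatively, one may modify $g_i$ outside $S_1$ without affecting any positive solution of \eqref{3.16}), and the rest of the argument is a routine bookkeeping of the identity above.
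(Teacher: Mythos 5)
Your proposal is correct and follows essentially the same route as the paper: the same substitution $y=N/N^*-1$ killing the impulses, the same Yorke functions $\lambda_{1,i}=b_i\beta_i e^{-\beta_i N^*}$ and $\lambda_{2,i}=b_i\beta_i$ on the admissible set, the same identity $e^{-\int_s^t r}=\frac{N^*(s)}{N^*(t)}\prod_{k:s\le t_k<t}(1+b_k)e^{-\int_s^t a}$, and the same appeal to Corollary \ref{cor2.1}. No substantive differences.
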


\begin{proof} First, we observe that
	for $t>0, t\ne t_k,$
	$$r(t)={1\over{N^*(t)}}\Big [(N^*)'(t)+a(t)N^*(t)\Big].
	$$
	For $t>\bar{m}\omega$ and $t-\bar{m}\omega\le s\le t,$ we get
	\begin{equation*}
	\begin{split}
	\int_{s}^t r(u)\, du&=\int_s^t a(u)\, du+\int_s^t{{(N^*)'(u)}\over{N^*(u)}}\, du\\
	&=\int_s^t a(u)\, du+\log \left ( {{N^*(t)}\over {N^*(s^+)}}\right)+\sum_{k:s< t_k<t}\log \left ( {{N^*(t_k)}\over {N^*(t_{k}^+)}}\right)\\
	&=\int_s^t a(u)\, du+\log \left ({{N^*(t)}\over {N^*(s)}} \prod_{k:s\le t_k<t} (1+b_k)^{-1}\right),
	\end{split}
	\end{equation*}
	hence
	\begin{equation}\label{3.25}
	e^{-\int_s^t r(u)\, du}=e^{-\int_s^t a(u)\, du} \ {{N^*(s)}\over {N^*(t)}} \prod_{k:s\le t_k<t} (1+b_k).
	\end{equation}
	For the DDE without impulses \eqref{3.23}, the Yorke hypothesis (H4) is satisfied with
	$$\la_1(s)=\sum_{i=1}^n b_i(s)\be_i(s)e^{-\be_i(s)N^*(s)},\q \la_2(s)=  \sum_{i=1}^n b_i(s)\be_i(s).
	$$
	Now, let $\al_j (t)=\int_{t-\bar{m}\omega}^t \la_j(s) e^{-\int_s^t r(u)\, du}\, ds$ for $j=1,2, t>0$.
	The $\omega$-periodic functions $\al_j (t)$ satisfy $\al_j (t)\le \al_j$ for $\al_j, j=1,2,$ as in \eqref{3.24}, and the result follows from Corollary \ref{cor2.1}.\end{proof}

Following the approach  in Theorem \ref{thm3.3}, we now get  estimates for $\al_1,\al_2$ which are easier to verify, although they are not as refined as in \eqref{3.24}.

\begin{theorem}\label{thm3.6} Consider system \eqref{3.16} with $b_k>-1$ for all $k$ and denote $\dps \bar{m}=\max_{1\le i\le p} m_i$. Assume
	($f_0$), \eqref{3.17}, \eqref{3.18}, and let $N^*(t)$ be an $\omega$-periodic positive solution. 
	If  
	\begin{equation}\label{3.26}\sigma:=\Big (\overline{\be N^*}(e^{ \overline{\be N^*}}-1)\Big )^{1\over 2} \left [1-  \left (e^{-\int_0^\omega a(u)\, du}\prod_{i=1}^p (1+b_k)\right)^{\bar{m}}\right]<1,
	\end{equation}
	then  $\lim_{t\to\infty} \big (N(t)-N^*(t)\big )=0$ for  any positive solution $N(t)$ of \eqref{3.16}. \end{theorem}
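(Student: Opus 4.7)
My plan is to keep the reduction of \eqref{3.16} to the non-impulsive DDE \eqref{3.23} via $y(t)=N(t)/N^*(t)-1$ used in Theorem \ref{thm3.5} and apply Corollary \ref{cor2.1} to \eqref{3.23}, but with a sharper Yorke upper bound (in the spirit of \eqref{3.6} in Remark \ref{rmk3.1}) that exploits the admissibility constraint $y(\theta)\ge -1$ coming from positivity of $N$.

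First I re-derive the Yorke condition (H4) for \eqref{3.23}. The lower bound $\lambda_1(s)=\sum_{i=1}^n b_i(s)\beta_i(s)e^{-\beta_i(s)N^*(s)}$ follows from $e^{-x}-1\ge -x$, exactly as in Theorem \ref{thm3.5}. For the upper bound, since $\mathcal{M}_t(-y_t)\le 1$ on the admissible set $S_1$, the quantity $x:=\beta_i(t)N^*(t)\mathcal{M}_t(-y_t)$ satisfies $0\le x\le \overline{\beta N^*}$; convexity of $z\mapsto e^z-1$ on $[0,\overline{\beta N^*}]$ then yields $e^x-1\le \frac{e^{\overline{\beta N^*}}-1}{\overline{\beta N^*}}\,x$. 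Substituting gives the upper Yorke bound with $\lambda_2(s)=\frac{e^{\overline{\beta N^*}}-1}{\overline{\beta N^*}}\,\lambda_1(s)$, which is sharper than the $\sum_i b_i(s)\beta_i(s)$ used in \eqref{3.24}.

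The next step is to estimate the $\alpha_j$ for \eqref{3.23} in the framework of Corollary \ref{cor2.1}. I will exploit the identity $\sum_{i=1}^n b_i(s)e^{-\beta_i(s)N^*(s)}=(N^*)'(s)+a(s)N^*(s)=N^*(s)r(s)$ (valid off the impulse times) together with $\beta_i(s)N^*(s)\le \overline{\beta N^*}$, giving $\lambda_1(s)\le \overline{\beta N^*}\,r(s)$. Setting $\Phi(s):=e^{-\int_s^t r(u)\,du}$, formula \eqref{3.25} combined with $N^*(t_k^+)=(1+b_k)N^*(t_k)$ shows that $\Phi$ is \emph{continuous} at each $t_k$: the jump of $N^*$ cancels the change in the product $\prod_{k: s\le t_k<t}(1+b_k)$. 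Off the jumps $\Phi'(s)=r(s)\Phi(s)$, so
\[
\int_{t-\bar m\omega}^t r(s)\Phi(s)\,ds=\Phi(t)-\Phi(t-\bar m\omega)=1-\left(e^{-\int_0^\omega a(u)\,du}\prod_{k=1}^p(1+b_k)\right)^{\bar m}=:K,
\]
by the $\omega$-periodicity of $a$, $N^*$, and $(b_k)$. Hence $\alpha_1\le \overline{\beta N^*}\,K$ and $\alpha_2\le (e^{\overline{\beta N^*}}-1)\,K$, so $\alpha_1\alpha_2\le \overline{\beta N^*}(e^{\overline{\beta N^*}}-1)K^2=\sigma^2<1$.

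To invoke Corollary \ref{cor2.1} I also need (H2)(ii) for \eqref{3.23}, i.e.\ $\int_0^\infty r(u)\,du=\infty$: this is immediate from the formula for $\Phi$ because $\int_0^\omega a>0$ and $\prod_{k=1}^p(1+b_k)\le 1$ by \eqref{3.18}. Corollary \ref{cor2.1} then yields $y(t)\to 0$, equivalently $N(t)-N^*(t)\to 0$. I expect the main obstacle to be the continuity of $\Phi$ across impulse times --- the bookkeeping showing that the jumps in $N^*$ exactly neutralize those of the product factor is what makes the telescoping succeed and delivers the clean constant $K$; once that is in hand the remaining estimates are a routine computation following the pattern of Theorem \ref{thm3.3}.
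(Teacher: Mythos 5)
Your proposal is correct and follows essentially the same route as the paper: the paper proves Theorem \ref{thm3.6} by passing to the non-impulsive equation \eqref{3.23} and invoking Theorem \ref{thm3.3} with $a$ replaced by $r$ and $b_k=0$, which, once unwound, is exactly your explicit computation (the Remark \ref{rmk3.1}-type choice of $\lambda_2$, the telescoping $\int r(s)e^{-\int_s^t r}\,ds=1-e^{-\bar m\int_0^\omega r}$ via the continuity of $s\mapsto e^{-\int_s^t r}$ and \eqref{3.25}) feeding into Corollary \ref{cor2.1}. The only quibble is your side remark that your $\lambda_2$ is sharper than the one in \eqref{3.24} — that comparison can go either way — but it plays no role in the argument.
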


\begin{proof} We first observe that, for any fixed $t>0$, the function  $h(s):=e^{-\int_s^t r(u)\, du}$ defined for $ s\in [t-\bar{m}\omega,t]$ is continuous.
	Applying Theorem \ref{thm3.3} to \eqref{3.16}, we deduce  the global asymptotic stability of $N^*(t)$ if $\sigma <1$, where $\sigma$ is given by \eqref{3.8} with $b_k=0$ for all $k$ and $a(t)$ is replaced by $r(t)$.  Thus $\sigma$ is as in \eqref{3.26}, since \eqref{3.25} implies
	$$e^{-\bar m\int_0^\omega r(u)\, du}=e^{-\bar m\int_0^\omega a(u)\, du}\prod_{k=1}^p (1+b_k)^{\bar m}.$$
\end{proof}

\begin{remark}\label{rmk3.5} \rm{
		Under  stronger constraints, including that $\prod_{i=1}^p (1+b_k)=1$,  Yan \cite{12.} claimed that the condition
		$$\sum_{i=1}^n \overline{q_i}\int_0^{\bar{m}\omega} b_i(s)\Big(\prod_{k:0<t_k<s}(1+b_k) \Big) e^{-\be_i(s)N^*(s))}\, ds\le 1,$$
		with $\overline{q_i}=\dps \sup_{t\ge 0}\, \Big(\prod_{k:0<t_k<t}(1+b_k) \Big) \be_i(t)$, implies the global attractivity of the positive $\omega$-periodic solution $N^*(t)$ of \eqref{3.16} (see also Remark \ref{rmk3.3}, for a comment in   \cite{6.}).
		In any case, the results  in Theorems \ref{thm3.5} and \ref{thm3.6} are  not easily comparable with the claim in \cite{12.}.}\end{remark}

\section*{Acknowledgments}\noindent{This work was  supported by Funda\c c\~ao para a Ci\^encia e a Tecnologia, under 
	projects  UID/MAT/04561/2013 (T. Faria) and\\ PEstOE/MAT/UI0013/2014 (J.J. Oliveira).


\medskip
Received xxxx 20xx; revised xxxx 20xx.
\medskip

\end{document}